\documentclass[12pt,notcite,notref]{article}
\usepackage{amsmath}
\usepackage{amsthm}
\usepackage{epsfig,amssymb}
\usepackage{amsfonts}
\usepackage{amsopn}
\usepackage{array}
\usepackage{graphpap}
\usepackage{hyperref}\usepackage{latexsym}
\usepackage{epsfig,color,rotating}
\usepackage{color}%
\usepackage{subfig}

\parindent0em
\hoffset -0.5cm
\voffset -0.5cm

\addtolength{\textheight}{2cm}
\addtolength{\textwidth}{1.3cm}
\footskip 1.5cm

%
%
%
 \newtheorem{theorem}{Theorem}[section]
 \newtheorem{cor}[theorem]{Corollary}
 \newtheorem{lemma}[theorem]{Lemma}

 \newtheorem{remark}[theorem]{Remark}
 
\newtheorem{algo}[theorem]{Algorithm}

\usepackage[T1]{fontenc}
\usepackage[latin1]{inputenc}

\usepackage{bbm} 
\usepackage{amssymb}

\usepackage{algorithmic}
\usepackage[Algorithm, section]{algorithm}
\usepackage{MnSymbol}
\usepackage{grffile}
\usepackage{verbatim}
\usepackage[all]{xy}

\newcommand{\Hil}{\mathbbm{X}} 
\newcommand{\R}{\mathbbm{R}}

\newcommand{\N}{\mathbbm{N}}
\newcommand{\spr}[1]{\left\langle #1 \right\rangle} 

\newcommand{\bm}[1]{\mbox{\boldmath{$#1$}}} 
\newcommand{\bmsub}[1]{\mbox{\scriptsize \boldmath{$#1$}}}

\newcommand{\m}{\mathfrak m} 
\newcommand{\bfphi}{\bm{\varphi}}
\newcommand{\bfeta}{\bm{\eta}}
\newcommand{\bfp}{{\bm{p}}}
\newcommand{\bfu}{\boldsymbol{u}}
\newcommand{\bff}{{\bm{f}}}
\newcommand{\bfz}{{\bm{z}}}

\newcommand{\bfC}{{\bm{C}}}
\newcommand{\bfxi}{{\bm{\xi}}}

\newcommand{\bfv}{{\bm{v}}}

\newcommand{\Dual}{\mathbbm D}
\newcommand{\Phiad}{\Phi_{ad}}

\newcommand{\bfvarphi}{{\boldsymbol\varphi}}
\newcommand{\E}{\mathcal{E}} 
\newcommand{\bfg}{{\bm{g}}}
\newcommand{\Frechet}{Fréchet}
\newcommand{\Poincare}{Poincaré}
\newcommand{\relmiddle}{\mathrel{}\middle|\mathrel{}} 
\newcommand{\grossO}{\mathcal{O}} 

\def\smallint{\begingroup\textstyle \int\endgroup}


\begin{document}

\title{
An extension of the projected gradient method to a Banach space setting with application in structural topology optimization.}

\author{Luise Blank, Christoph Rupprecht}

\date{}
\maketitle

\begin{abstract}
For the minimization of a nonlinear cost functional $j$ under convex constraints the relaxed projected gradient process
\begin{align*}
	\varphi_{k+1} = \varphi_{k} + \alpha_k(P_H(\varphi_{k}-\lambda_k \nabla_H j(\varphi_{k}))-\varphi_{k})
\end{align*}
as formulated e.g. in \cite{DemyanovRubinov} is a well known method. The analysis is classically performed in a Hilbert space $H$. We generalize this method to functionals $j$ which are differentiable
in a Banach space. Thus it is possible to perform e.g. an $L^2$ gradient method if $j$ is only differentiable in $L^\infty$. We show global convergence using Armijo backtracking in $\alpha_k$ and allow the inner product and the scaling $\lambda_k$ to change
in every iteration. As application we present a structural topology optimization problem based on a phase field model, where the reduced cost functional $j$ is differentiable
in $H^1\cap L^\infty$. The presented numerical results using the $H^1$ inner product and a pointwise chosen metric including second
order information show the expected mesh independency in the iteration numbers. The latter yields an additional, drastic decrease in iteration numbers as well as in computation time. Moreover we present numerical results using a BFGS update of the $H^1$ inner product for further optimization problems based on phase field models.
\end{abstract}

\bigskip\noindent
{\bf Key words:}{ projected gradient method, variable metric method, convex constraints, shape and topology optimization, phase field approach.}

\noindent
{\bf  AMS subject classification:} 49M05, 49M15, 65K, 74P05, 90C.

\section{Introduction}
Let $j$ be a functional on a Hilbert space $H$ with inner product $(.,.)_H$ 
and induced norm $\|.\|_H$ and let $\Phiad\subseteq H$ be a non-empty,
convex and closed subset. We consider the optimization problem
\begin{align}\label{eq:optprobl}
	\min j(\varphi)\ \text{ subject to } \varphi\in \Phiad.
\end{align}
If $j$ is \Frechet{} differentiable with respect to $\|.\|_H$, 
the classical projected gradient method introduced in Hilbert space in
\cite{goldstein1964} and \cite{levitin1966constrained} can be applied, 
which moves in the direction of the negative $H$-gradient $-\nabla_H j\in H$, which is characterized by the equality
$(\nabla_Hj(\varphi),\eta)_H = \spr{j'(\varphi),\eta}_{H^*,H}$ $\forall\eta\in H$ and orthogonally projects the result 
back on $\Phiad$ to stay feasible, i.e.
\begin{align}\label{pro1}
	\varphi_{k+1} = P_H(\varphi_{k}-\lambda_k\nabla_H j(\varphi_{k})).
\end{align}
To obtain global convergence $\lambda_k$ has to be chosen according to some step length rule, which results in a gradient path method, or
one can perform a line search along the descent direction $v_k = P_H(\varphi_{k}-\lambda_k\nabla_H j(\varphi_{k})) - \varphi_k$.
A typical application is $H= L^2(\Omega)$, see e.g. \cite{KelleySachs92}.

In this paper we consider the case that $j$ is differentiable with respect to 
a norm which is not induced by a inner product. 
Hence no $H$-gradient $\nabla_H j$ exists. 
However, in Section \ref{sec:VMPT} we reformulate the method such that it is well defined 
under weaker conditions.
We show global convergence when Armijo backtracking is applied along $v_k$ and allow the inner product and the scaling $\lambda_k$ to change in every iteration.
We call this generalization `variable metric projection' type (VMPT) method.
In Section \ref{sec:Application} we study the applicability of the method to a structural topology optimization problem, namely
the mean compliance minimization in linear elasticity based on a phase field model. 
Then the reduced cost functional is differentiable only in  $H^1\cap L^\infty$.
In the last section we show numerical results for this mean compliance problem. As expected choosing the $H^1$ metric leads to
mesh independent iteration numbers in contrast to the $L^2$ metric. We also present the choice of a variable metric
using second order information and the choice of a BFGS update of the $H^1$ metric. 
This reduces the iteration numbers to less than a hundreth. 
Moreover, we give additional numerical examples for the successful application of the VMPT method.
These include a problem of compliant mechanism, drag minimization of the Stokes flow and an inverse problem.

\section{Variable metric projection type (VMPT) method}\label{sec:VMPT}
\subsection{Generalization of the projected gradient method}
The orthogonal projection $P_H(\varphi_{k}-\lambda_k\nabla_H j(\varphi_{k}))$ employed in \eqref{pro1} is the unique solution of
\begin{align*}
	\min_{y \in\Phiad}\frac{1}{2}\|(\varphi_{k}-\lambda_k\nabla_H j(\varphi_{k})) - y \|^2_{H},
\end{align*}
which is equivalent to the problem
\begin{align}\label{eq:proj2}
\min_{ y \in\Phiad}\frac{1}{2}\| y -\varphi_{k}\|^2_{H} 
+\lambda_k Dj(\varphi_{k}, y -\varphi_{k}),
\end{align}
since  
$(\nabla_H j(\varphi_{k}),y -\varphi_{k})_H = j'(\varphi_{k})(y -\varphi_{k})
=  Dj(\varphi_{k},y -\varphi_{k})$ where the last denotes the directional derivative of $j$ at
$\varphi_{k}$ in direction $  y -\varphi_{k}$.
If e.g. $ Dj(\varphi_{k}, y)$ is linear and continuous with respect to $y\in H$ the
cost functional of (\ref{eq:proj2}) is strictly convex, continuous and coercive in $H$, and hence  \eqref{eq:proj2}
has a unique solution $\bar\varphi_k$ \cite{Dacorogna}. 
In the formulation \eqref{eq:proj2} the existence of the gradient $\nabla_H j$ is not required. Even G\^ateaux differentiability can be omitted.\\
In the following we formulate an extension of the projected gradient method where
 $P_H(\varphi_{k}-\lambda_k\nabla_H j(\varphi_{k}))$ is replaced by the solution $\bar\varphi_k$ of \eqref{eq:proj2}.

First we drop the requirement of a gradient as mentioned above. We assume that the admissible set $\Phiad $ is a subset of an intersection of Banach spaces $\Hil\cap \Dual$, where $\Hil$ and $\Dual$ have certain properties (see \ref{ass:LimitsUnique}), which are e.g. fulfilled for $\Hil=H^1(\Omega)$ or $\Hil=L^2(\Omega)$ and $\Dual= L^\infty(\Omega)$. 
Furthermore assume that $j$ is continuously \Frechet{} differentiable on $\Phiad $ with respect to the norm 
$\|.\|_{\Hil\cap \Dual}:= \|.\|_{\Hil}+\|.\|_{\Dual}$.
The \Frechet{} derivative of $j$ at $\varphi$ is denoted by $j'(\varphi)\in (\Hil\cap \Dual)^*$ and we write
$\spr{.,.}$ for the dual paring in the space $\Hil\cap \Dual$. Moreover, we use $C$ as a positive universal constant throughout the paper.

Secondly, we also allow the 
norm $\|.\|_H$ in (\ref{eq:proj2}) to change in every iteration.
Therefore, we consider a sequence $\{a_k\}_{k\geq 0}$ of symmetric positive definite bilinear forms 
inducing norms $\|.\|_{a_k} $  on $\Hil\cap\Dual$ .
This approach falls into the class of variable metric methods and includes
the choice of Newton and Quasi-Newton based search directions 
(see for example \cite{Bertsekas, Dunn1980} and \cite{gruver1981algorithmic} for the unconstrained case). In \cite{Bertsekas} these methods are called scaled gradient projection methods and
in the case of $a_k = j''(\varphi_k)$ also constrained Newton's method. In finite dimension $a_k$ is given by $a_k(p,v):= p^T B_k v$ where $B_k$ can be
the Hessian at $\varphi_k$ or an approximation of it.

Hence, in each step of the VMPT method 
the projection type subproblem
\begin{align}
	\min_{y  \in\Phiad} \quad&\frac{1}{2}\| y -\varphi_k\| _{a_k}^2 
+ \lambda_k \spr{j'(\varphi_k),y -\varphi_k} 
\label{eq:projproblvm}
\end{align}
with some scaling parameter $\lambda_k >0$ has to be solved.
Problem \eqref{eq:projproblvm} is formally equivalent to the projection
$P_{a_k}(\varphi_{k}-\lambda_k\nabla_{a_k} j(\varphi_{k}))$. 
However, $j$ is not necessarily
differentiable with respect to $\|.\|_{a_k}$ and $\Hil\cap \Dual$ endowed with $a_k(.,.)$ is only a 
pre-Hilbert space. 
Hence $\nabla_{a_k} j(\varphi_{k})$ does not need to exist.
For globalization of the method we perform a line search 
based on the
widely used Armijo back tracking, which results in Algorithm \ref{algorithm1}.
In the next section it is shown that the algorithm is well defined under certain assumptions and 
in particular that a unique solution $\bar\varphi_k$ of \eqref{eq:projproblvm} exists, together with
the proof of convergence. We denote the solution of \eqref{eq:projproblvm} also by $\mathcal P_k(\varphi_k)$ due to the connection to
a projection.
\begin{algo}[VMPT method]\label{algorithm1}
\quad 
\begin{algorithmic}[1]
\STATE Choose $0<\beta < 1$, $0<\sigma<1$ and $\varphi_0 \in \Phiad$.
\STATE $k := 0$
\WHILE{$k \leq k_{\textrm{max}}$}
	\STATE Choose $\lambda_k$ and $a_k$.
	\STATE \label{choiceofoverlinevarphi} Calculate the minimum $\overline\varphi_k=\mathcal P_k(\varphi_k)$ of the subproblem \eqref{eq:projproblvm}.
	\STATE Set the search direction $v_k := \overline\varphi_k - \varphi_k$
	\IF{$\|\bfv_k\|_\Hil \leq \textrm{tol}$}
		\RETURN
	\ENDIF	
	\STATE Determine the step length $\alpha_k:= \beta^{m_k}$ with minimal $m_k\in\N_0$ such that \\
\label{eq:armijo}
  	{\begin{center} $j(\varphi_k + \alpha_k v_k) \leq j(\varphi_k) + \alpha_k \sigma \spr{j'(\varphi_k),v_k}$.\end{center}}
	\STATE Update $\varphi_{k+1} := \varphi_k + \alpha_kv_k$
	\STATE $k:=k+1$
\ENDWHILE
\end{algorithmic}
\end{algo}
The stopping criterion $\|v_k\|_\Hil \leq tol$ is motivated by the fact that $\varphi_k$ 
is a stationary point of $j$ if and only if $v_k=0$ and $v_k \rightarrow 0 $ in $\Hil$, 
cf. Corollary \ref{cor:vkzstat} and Theorem \ref{thm:GlobalConvvm}.\\

We would like to mention, that this algorithm is not a line search along the {\em gradient path}
, which is widely used (e.g. in \cite{Bertsekas,Dunn81,Dunn87,GawandeDunn,goldstein1964,gruver1981algorithmic,hinze2008optimization,KelleySachs92,Trol})
and which requires to solve a projection type subproblem like \eqref{pro1} in each line search iteration. This can be unwanted if calculating
the projection is expensive compared to the evaluation of $j$. To avoid this we perform a line search along the descent direction  $v_k$, which is suggested e.g. in finite dimension or in Hilbert spaces in \cite{Bertsekas,gruver1981algorithmic,rustem1984} and is also used in \cite{Dunn1980}.
To include the idea of the gradient path approach, we imbed the possibility to vary the scaling factor $\{\lambda_k\}_{k\geq 0}$ for the formal gradient in \eqref{eq:projproblvm} in each iteration.
The parameter $\lambda_k$ can be put into $a_k$ by dividing the cost in \eqref{eq:projproblvm} 
by $\lambda_k$. 
However, we treat it as a separate parameter since this reflects the case
where $a_k$ is fixed for all iterations.
Note that under the assumptions used in this paper a line search along the gradient path is not possible since not even the existence of a positive step length can be shown, cf. Remark \ref{rem:gradpathnotpossible}.\\

Moreover, there is a clear connection to sequential quadratic programming, considering that $\mathcal P_k(\varphi_k)$ is the solution of the quadratic approximation of $\min_{\varphi\in\Phiad} j(\varphi)$ with
\begin{align*}
	\min_{y\in\Phiad} j(\varphi_k) + \spr{j'(\varphi_k),y-\varphi_k} + \frac 1 2 a_k(y-\varphi_k,y-\varphi_k).
\end{align*}
However, the global convergence result is analysed by means of projected gradient theory.

\subsection{Global convergence result}
We perform the analysis of the method with respect to two norms in the spaces $\Hil$ and $\Dual$, which we assume to have the following properties:
\newcounter{AssCount}
\renewcommand{\theAssCount}{\textbf{(A{\arabic{AssCount}})}}    
\begin{list}{\theAssCount}{\usecounter{AssCount}}\setlength{\itemsep}{0pt}
 \item\label{ass:LimitsUnique} $\Hil$ is a reflexive Banach space. $\Dual$ is isometrically isomorphic to $\mathbbm B^*$, where $\mathbbm B$ is a separable Banach space. Moreover, for any sequence $\{\varphi_i\}$ in $\Hil\cap \Dual$ with $\varphi_i\to\varphi$ weakly in $\Hil$ 
and $\varphi_i\to\tilde\varphi$ weakly-* in $\Dual$, it holds $\varphi = \tilde \varphi$.
\end{list}
We identify $\Dual$ and $\mathbbm B^*$ and say that a sequence converges 
weakly-* in $\Dual$ if it converges weakly-* in $\mathbbm B^*$.
The separability of $\mathbbm B$ is used to get weak-* sequential compactness in $\Dual$.
We would like to mention that the results hold also if $\Dual $ is a reflexive Banach space, in particular if $\Dual$
is an Hilbert space. In this case weak-* convergence has to be replaced by weak convergence throughout the paper.
However, in the application we are interested in $\Dual = L^\infty (\Omega)$.\\

In case of the Sobolev space  $\Hil = W^{k,p}(\Omega)$ and $\Dual = L ^q(\Omega)$ 
where $\Omega\subseteq \R^d$ is a bounded domain, $k\geq 0$, $1< p< \infty$ and $1< q\leq \infty$
the above assumption is fulfilled.

In addition to the above conditions on $\Hil$ and $\Dual$ 
let the following assumptions hold for the problem (\ref{eq:optprobl}):
\begin{list}{\theAssCount}{\usecounter{AssCount}}\setlength{\itemsep}{0pt}\setcounter{AssCount}{1}
 \item\label{ass:Convex} \label{ass:Closed} $\Phiad\subseteq \Hil \cap \Dual $ is convex, closed in $\Hil$ and non-empty.
 \item\label{ass:Bdd} $\Phiad$ is  bounded in $\Dual$.
 \item\label{ass:BddBelow} $j(\varphi) \geq -C> -\infty$ for some $C>0$ and all $\varphi\in\Phiad$.
 \item\label{ass:Diff} $j$ is continuously 
differentiable in a neighbourhood of $\Phiad\subseteq\Hil\cap\Dual$. 
 \item\label{ass:WeakDiff} For each $\varphi\in \Phiad$ and for each sequence $\{\varphi_i\}\subseteq \Hil\cap \Dual$ 
with  $\varphi_i\to 0$ weakly in $\Hil$ and 
 weakly-* in $\Dual$ it holds $\spr{j'(\varphi),\varphi_i}\to 0$
 as $i\to\infty$.
\end{list}
Moreover, we request for the parameters $a_k$ and $\lambda_k$ of the algorithm that:
\begin{list}{\theAssCount}{\usecounter{AssCount}}\setlength{\itemsep}{0pt}\setcounter{AssCount}{6}
 \item \label{ass:aInnerProduct} $\{a_k\}$ is a sequence of symmetric positive definite bilinear forms on $\Hil\cap\Dual$.
 \item\label{ass:aCoercive} It exists $c_1 > 0$ such that $c_1\|p\|^2_{\Hil} \leq \|p\|^2_{a_k}$ for all $p\in\Hil\cap\Dual$ and $k\in\N_0$.
 \item\label{ass:aBdd} For all $k\in\N_0$ it exists $c_2(k)$ such that $\|p\|^2_{a_k}\leq c_2\|p\|^2_{\Hil\cap\Dual}$ for all $p\in\Hil\cap\Dual$.
 \item\label{ass:aWeakDiff} For all $k\in\N_0$, $p\in \Phiad$ and for each sequence $\{y_i\}\subseteq \Phiad$ 
where there exists some $y \in \Hil\cap \Dual $ with $y_i\to y$ weakly in $\Hil$ and weakly-* in $\Dual$ 
 it holds $a_k(p,y_i)\to a_k(p,y)$ as $i\to\infty$.
 \item\label{ass:aToZ}
For each subsequence $\{ \varphi_{k_i}\}_i$ of the iterates given by Algorithm \ref{algorithm1} converging in $\Hil\cap \Dual$,
the corresponding subsequence $\{ a_{k_i}\}_i$ has the property that $a_{k_i}(p_i,y_i)\to 0$
for any sequences $\{p_i\},\{y_i\}\subseteq \Hil\cap \Dual$ with $p_i\to 0$ strongly in $\Hil$ and weakly-* in $\Dual$ and $\{y_i\}$ converging in $\Hil\cap\Dual$.
 \item\label{ass:Lambda} It holds $0<\lambda_{min}\leq \lambda_k\leq \lambda_{max}$ for all $k\in\N_0$.
\end{list}
\ref{ass:LimitsUnique}-\ref{ass:Lambda} are assumed throughout this paper if not mentioned otherwise.\\
Assumption \ref{ass:aToZ} reflects the possibility of a point based choice of $a_k$, e.g. dependent on 
the Hessian $D^2 j(\varphi_{k})$ or on an approximation of the Hessian. Note that \ref{ass:aBdd}-\ref{ass:aToZ} is weaker than the assumption $\|p\|^2_{a_k}\leq c_2\|p\|^2_{\Hil}$. In \eqref{eq:akso} an example of $a_k$ is given, which only fulfills these weaker assumptions. Also \ref{ass:aCoercive} is weaker than $c_1\|u\|^2_{\Hil\cap\Dual} \leq \|u\|^2_{a_k}$. The main result of the paper is the following, which is proved in Section \ref{sec:Proof}.
\begin{theorem}\label{thm:GlobalConvvm}
Let $\{\varphi_k\}\subseteq\Phiad$ be the sequence generated by the VMPT method (Algorithm \ref{algorithm1}) with $tol=0$ and let the assumptions \ref{ass:LimitsUnique}-\ref{ass:Lambda} hold, then:
\begin{enumerate}
\item $\lim_{k\to\infty}j(\varphi_k)$ exists.
\item Every accumulation point of $\{\varphi_k\}$ in $\Hil\cap \Dual$ is a stationary point of $j$.
\item For all subsequences with $\varphi_{k_i}\to \varphi$ in $\Hil\cap \Dual$ where $\varphi$ is stationary, 
the subsequence $\{ v_{k_i}\}_i $ converges strongly in $\Hil$ to zero.
\item If additionally $j\in C^{1,\gamma}(\Phiad)$  with respect to  $\|.\|_{\Hil\cap \Dual} $ for some $0<\gamma\leq 1$ then 
the whole sequence $\{ v_k \}_k$ converges to zero in  $\Hil$.
\end{enumerate}
\end{theorem}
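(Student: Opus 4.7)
The plan is to handle the four statements in order, all anchored in the first-order variational inequality characterising the subproblem minimiser $\overline\varphi_k = \varphi_k + v_k$: for every $y\in\Phiad$,
\begin{align*}
a_k(v_k, y - \varphi_k - v_k) + \lambda_k\spr{j'(\varphi_k), y - \varphi_k - v_k} \geq 0.
\end{align*}
Testing with $y=\varphi_k$ yields the descent estimate
\begin{align*}
\spr{j'(\varphi_k), v_k} \leq -\tfrac{1}{\lambda_k}\|v_k\|_{a_k}^2 \leq -\tfrac{c_1}{\lambda_{max}}\|v_k\|_\Hil^2,
\end{align*}
so combined with Armijo and \ref{ass:BddBelow} the sequence $\{j(\varphi_k)\}$ is monotonically decreasing and bounded below, which proves statement~1. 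Telescoping then yields $\sum_k \alpha_k\spr{-j'(\varphi_k),v_k}<\infty$ and in particular $\alpha_k\|v_k\|_\Hil^2 \to 0$; and combining the same variational inequality with \ref{ass:Bdd} and \ref{ass:Diff} shows that $\{v_k\}$ is bounded in $\Hil\cap\Dual$.

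For statements~2 and~3, I fix a subsequence $\varphi_{k_i}\to\varphi$ in $\Hil\cap\Dual$ and first show $\|v_{k_i}\|_\Hil\to 0$ by a dichotomy on $\alpha_{k_i}$. If $\liminf\alpha_{k_i}>0$ this is immediate from $\alpha_k\|v_k\|_\Hil^2 \to 0$; if $\alpha_{k_i}\to 0$ along a further subsequence, the Armijo rule has rejected $\alpha_{k_i}/\beta$, and writing the failure in integral form, dividing by $\alpha_{k_i}/\beta$, and exploiting the boundedness of $\{v_{k_i}\}$ in $\Hil\cap\Dual$ together with the continuity of $j'$ from \ref{ass:Diff} yields $\spr{j'(\varphi_{k_i}),v_{k_i}}\to 0$, so the descent estimate forces $\|v_{k_i}\|_\Hil\to 0$. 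The $\Dual$-boundedness of $\Phiad$ from \ref{ass:Bdd} together with the separability in \ref{ass:LimitsUnique} then produces a further weak-* convergent subsequence of $\{v_{k_i}\}$, and the unique-limits property in \ref{ass:LimitsUnique} identifies its weak-* limit as $0$. I can then pass to the limit in the variational inequality for arbitrary $y\in\Phiad$: \ref{ass:aToZ} gives $a_{k_i}(v_{k_i}, y-\varphi_{k_i})\to 0$; the descent estimate combined with $\spr{j'(\varphi_{k_i}),v_{k_i}}\to 0$ gives $\|v_{k_i}\|_{a_{k_i}}^2\to 0$; \ref{ass:Diff} gives $\spr{j'(\varphi_{k_i}), y-\varphi_{k_i}}\to \spr{j'(\varphi), y-\varphi}$; and after extracting $\lambda_{k_i}\to\lambda^*\in[\lambda_{min},\lambda_{max}]$ the limit reads $\lambda^*\spr{j'(\varphi),y-\varphi}\geq 0$, which is stationarity of $\varphi$.

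For statement~4, I argue by contradiction: if $\|v_k\|_\Hil\not\to 0$, some subsequence satisfies $\|v_{k_i}\|_\Hil\geq\varepsilon>0$, and $\alpha_k\|v_k\|_\Hil^2\to 0$ forces $\alpha_{k_i}\to 0$. The Hölder hypothesis in the integral form of the Taylor expansion yields
\begin{align*}
j(\varphi_{k_i}+\tfrac{\alpha_{k_i}}{\beta}v_{k_i}) - j(\varphi_{k_i}) \leq \tfrac{\alpha_{k_i}}{\beta}\spr{j'(\varphi_{k_i}),v_{k_i}} + C\bigl(\tfrac{\alpha_{k_i}}{\beta}\bigr)^{1+\gamma}\|v_{k_i}\|_{\Hil\cap\Dual}^{1+\gamma}.
\end{align*}
Combining this with Armijo failure at $\alpha_{k_i}/\beta$ and the descent estimate, while using the $\Dual$-bound from \ref{ass:Bdd} to control $\|v_{k_i}\|_{\Hil\cap\Dual}$ in terms of $\|v_{k_i}\|_\Hil$, gives a uniform positive lower bound on $\alpha_{k_i}$, contradicting $\alpha_{k_i}\to 0$.

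The hardest step will be the limit passage in the variational inequality for statements~2 and~3: the inner products $a_k$ change with $k$ and are only bounded above with $k$-dependent constants (\ref{ass:aBdd}), so eliminating the $a_{k_i}(v_{k_i},\cdot)$ terms hinges delicately on \ref{ass:aToZ}, whose applicability requires $v_{k_i}\to 0$ both strongly in $\Hil$ \emph{and} weakly-* in $\Dual$. The latter is not automatic from the former and is extracted by combining \ref{ass:Bdd}, the separability in \ref{ass:LimitsUnique}, and the unique-limits property to pin the weak-* limit to $0$.
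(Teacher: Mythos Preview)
Your argument is correct, but it takes a genuinely different route from the paper's for statements 2--3 and, to a lesser extent, for statement~4.

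For statements 2 and 3 the paper argues by contradiction through a separate ``gradient-related'' lemma: assuming the accumulation point $\varphi$ is nonstationary, it shows $\liminf_i \|v_{k_i}\|_\Hil>0$ and $\limsup_i\spr{j'(\varphi_{k_i}),v_{k_i}}<0$, and then the Armijo-failure argument produces the contradiction $\spr{j'(\varphi),\bar v}\geq 0$ for a suitable weak limit $\bar v$. Statement~3 is proved afterwards, \emph{using} the already-established stationarity of $\varphi$. You instead prove $\|v_{k_i}\|_\Hil\to 0$ directly for \emph{every} convergent subsequence (without assuming anything about $\varphi$), and then pass to the limit in the variational inequality via \ref{ass:aToZ} to read off stationarity. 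This is a more unified treatment: your version of statement~3 is slightly stronger than stated, and statement~2 falls out of it. The paper's organisation, on the other hand, isolates the gradient-relatedness property, which connects to the standard finite-dimensional framework of Bertsekas and is of independent interest. For statement~4 the paper derives the algebraic inequality $x_k\leq C\alpha_k^\gamma(x_k^{(1+\gamma)/2}+1)$ with $x_k=|\spr{j'(\varphi_k),v_k}|$ and argues it forces $x_k\to 0$; your contradiction via a uniform lower bound on $\alpha_{k_i}$ is equivalent once one observes that $t\mapsto t^2/(t+C)^{1+\gamma}$ is increasing on $(0,\infty)$ for $\gamma\le 1$.

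One small caution: your early claim that the \emph{full} sequence $\{v_k\}$ is bounded in $\Hil\cap\Dual$ from \ref{ass:Bdd} and \ref{ass:Diff} alone is not justified, since $\|j'(\varphi_k)\|_{(\Hil\cap\Dual)^*}$ need not be bounded without compactness of $\{\varphi_k\}$; the paper's Lemma~\ref{thm:PHBdd} only gives boundedness along subsequences with $\varphi_{k_i}\to\varphi$. Fortunately you do not actually use the full-sequence bound: in parts 2--3 the convergent subsequence supplies it, and in part~4 your lower-bound argument on $\alpha_{k_i}$ goes through with the estimate $\|v_{k_i}\|_{\Hil\cap\Dual}\le \|v_{k_i}\|_\Hil+C$ alone. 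Also, your dichotomy ``$\liminf\alpha_{k_i}>0$ versus $\alpha_{k_i}\to 0$ along a further subsequence'' should be wrapped in the usual subsequence-of-subsequence argument to conclude convergence of the whole subsequence $\{v_{k_i}\}$; this is routine but should be stated. Finally, in the limit passage you assert $\spr{j'(\varphi_{k_i}),v_{k_i}}\to 0$; along the full subsequence this is most cleanly obtained, once $v_{k_i}\to 0$ strongly in $\Hil$ and weakly-* in $\Dual$, from \ref{ass:WeakDiff} together with \ref{ass:Diff} (the paper's Lemma~\ref{lem:Tech}), rather than from the Armijo-failure branch alone.
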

In the classical Hilbert space setting, i.e. $\Dual = \Hil = H$ for some Hilbert space $H$, the assumption \ref{ass:Bdd} can be dropped. Also assumption \ref{ass:WeakDiff} is trivial because of \ref{ass:Diff}. Moreover, assumptions \ref{ass:aInnerProduct}-\ref{ass:aToZ} are fulfilled for the choice $a_k(p,v) = (p,A_kv)_H$ where $A_k\in \mathcal L(H)$ is a self-adjoint linear operator with $m\|p\|_H^2\leq (p,A_kp)_h \leq M\|p\|_H^2$ and $M\geq m>0$ independent of $k$. This is e.g. assumed in the local convergence theory in \cite{Dunn87,GawandeDunn} and in finite dimension for global convergence in \cite{Bertsekas,rustem1984}. For the special choice $a_k(p,v) = (p,v)_H$, global convergence is shown in \cite{gruver1981algorithmic} and for the case of a line search along the gradient path in \cite{Dunn81}. Result 4. of Theorem \ref{thm:GlobalConvvm} is shown in \cite{hinze2008optimization} in case of a line search along the gradient path under the same assumption $j\in C^{1,\gamma}$. Thus the presented method is a generalization of the classical method in Hilbert space.\\

We would also like to mention the following:
\begin{remark}
If there exists $C>0$ such that $\|p\|_\Dual \leq C\|p\|_\Hil$ for all $p\in\Hil\cap\Dual$, assumption \ref{ass:Bdd} can be omitted.\\
If $\Hil$ is a Hilbert space, the choice $a_k(u,v) = (u,v)_H$ fulfills all assumptions \ref{ass:aInnerProduct}-\ref{ass:aToZ}.
\end{remark}

\subsection{Analysis and proof of the convergence result of the VMPT method}\label{sec:Proof}
We first show the existence and uniqueness of $\overline\varphi_k=\mathcal P_k(\varphi_k)$ based on the direct method in the calculus of variations using the following Lemma and
assumptions \ref{ass:Convex}, \ref{ass:Bdd} and \ref{ass:Diff}-\ref{ass:aWeakDiff}. 
Note that the standard proof cannot be applied, since $a_k$ is indeed $\Hil$-coercive, but 
$a_k$ and $\spr{j'(\varphi_k), \cdot }$ are not $\Hil$-continuous. 
Another difficulty is that $\Hil\cap\Dual$ is not necessarily reflexive.
\begin{lemma}\label{lem:wsconv}
 Let $\{p_k\}\subseteq\Phiad$ with $p_k\to p$ weakly in $\Hil$ for some $p\in\Phiad$. Then $p_{k}\to p$ weakly-* in $\Dual$.
\end{lemma}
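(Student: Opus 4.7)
The plan is to combine weak-* sequential compactness (coming from boundedness in $\Dual$ plus separability of $\mathbbm B$) with the uniqueness of limits asserted in assumption \ref{ass:LimitsUnique}, and then upgrade subsequential convergence to convergence of the whole sequence via the standard subsequence principle applied pointwise in $\mathbbm B$.

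More concretely, I would first fix an arbitrary subsequence $\{p_{k_j}\}$ of $\{p_k\}$. By \ref{ass:Bdd}, this subsequence is bounded in $\Dual\cong\mathbbm B^*$; since $\mathbbm B$ is separable, the sequential Banach--Alaoglu theorem applies and yields a further subsequence $\{p_{k_{j_\ell}}\}$ converging weakly-* in $\Dual$ to some $\tilde p\in\Dual$. On the other hand, $\{p_{k_{j_\ell}}\}$ is still a subsequence of the originally weakly convergent sequence $\{p_k\}$, hence $p_{k_{j_\ell}}\to p$ weakly in $\Hil$. Assumption \ref{ass:LimitsUnique} then forces $\tilde p = p$.

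To pass from subsequential convergence to convergence of the whole sequence, I would test against an arbitrary element of the predual. For each fixed $b\in\mathbbm B$, consider the scalar sequence $\langle b,p_k\rangle$. The argument of the previous paragraph shows that every subsequence of $\{p_k\}$ admits a further subsequence along which $\langle b,p_{k_{j_\ell}}\rangle\to\langle b,p\rangle$. Since the subsequence principle holds in $\R$, this forces $\langle b,p_k\rangle\to\langle b,p\rangle$. As $b\in\mathbbm B$ was arbitrary, we conclude $p_k\to p$ weakly-* in $\Dual$.

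The argument is short and mostly mechanical; the only delicate point is being careful that weak-* compactness is sequential here (which requires the separability of $\mathbbm B$ stipulated in \ref{ass:LimitsUnique}, since in general the weak-* topology on a dual space is only compact, not sequentially compact, on bounded sets), and that the uniqueness clause of \ref{ass:LimitsUnique} is used to pin down the limit $\tilde p$ so that every weak-* cluster point coincides with the given weak limit $p$. The fact that $p\in\Phiad$ and $\Phiad\subseteq\Hil\cap\Dual$ is used implicitly to ensure $p$ lives in $\Dual$, so the statement $p_k\to p$ weakly-* in $\Dual$ is meaningful.
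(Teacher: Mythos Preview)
Your proof is correct and follows essentially the same line as the paper's: boundedness in $\Dual$ (from \ref{ass:Bdd}) plus separability of $\mathbbm B$ gives weak-* sequential compactness, the uniqueness clause in \ref{ass:LimitsUnique} pins down the limit as $p$, and the subsequence principle finishes the argument. The only cosmetic difference is that you spell out the last step by testing against each $b\in\mathbbm B$ and invoking the subsequence principle in $\R$, whereas the paper applies it directly at the level of weak-* convergence; both are valid here since the weak-* topology is metrizable on bounded sets when $\mathbbm B$ is separable.
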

\begin{proof}
Since $\Phiad $ is bounded in $\Dual$ and the closed unit ball of $\Dual$ is weakly-* sequentially compact due to the separability of $\mathbbm B$, 
we can extract from any subsequence of $\{p_k\} \subseteq \Phiad$ another subsequence $\{p_{k_i}\}$ with $p_{k_i} \rightarrow \tilde p$
weakly-* in $\Dual$ for some $\tilde p\in\Dual$. Due to the required unique limit in $\Hil$ and $\Dual$ we have $\tilde p =p$.
Since for any subsequence we find a subsequence converging to the same $p$, we have that the whole sequence converges to $p$.
\end{proof}
\begin{theorem} \label{thm:Projection}
For any $k\in\N_0$ and $\varphi \in\Phiad$, the problem 
\begin{align}
	\min_{y  \in\Phiad} \quad& \frac{1}{2}\| y -\varphi \| _{a_k}^2 
+ \lambda_k \spr{j'(\varphi),y -\varphi} 
\label{eq:projproblvm2}
\end{align}  admits a unique solution
$\bar\varphi := \mathcal P_k(\varphi)$, which is given by the unique solution of the variational inequality
\begin{align}\label{var}
a_k(\bar\varphi-\varphi,\eta- \bar\varphi) +\lambda_k \spr{j'(\varphi),\eta - \bar\varphi } \geq 0 \qquad \forall \eta \in \Phiad.
\end{align}
\end{theorem}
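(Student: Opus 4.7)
The plan is to apply the direct method of the calculus of variations to the functional
\[
 F(y) := \tfrac{1}{2}\|y-\varphi\|_{a_k}^{2} + \lambda_k\spr{j'(\varphi),y-\varphi},
\]
while handling the fact that $\Hil\cap\Dual$ is not assumed reflexive by working with weak convergence in $\Hil$ and weak-$*$ convergence in $\Dual$ in tandem, exactly as in Lemma \ref{lem:wsconv}. First I would check that $F$ is bounded below and coercive along $\Phiad$: by \ref{ass:aCoercive} the quadratic term dominates $c_1\|y-\varphi\|_\Hil^{2}$, while the linear term is estimated by $\|j'(\varphi)\|_{(\Hil\cap\Dual)^*}\bigl(\|y-\varphi\|_\Hil+\|y-\varphi\|_\Dual\bigr)$, with the $\Dual$-part bounded thanks to \ref{ass:Bdd}. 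Hence a minimising sequence $\{y_n\}\subseteq\Phiad$ is bounded in $\Hil$, and reflexivity of $\Hil$ together with convexity and $\Hil$-closedness of $\Phiad$ (\ref{ass:Convex}) yields a weak limit $\bar\varphi\in\Phiad$ along a subsequence; Lemma \ref{lem:wsconv} then upgrades this to weak-$*$ convergence in $\Dual$.

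The main technical step is proving weak lower semicontinuity of $F$ at $\bar\varphi$. For the quadratic term I would use the polarisation inequality available for any symmetric positive definite bilinear form,
\[
 \|y_n-\varphi\|_{a_k}^{2} \;\geq\; 2\,a_k(\bar\varphi-\varphi,\,y_n-\varphi)\;-\;\|\bar\varphi-\varphi\|_{a_k}^{2},
\]
and pass to the limit in the right-hand side via assumption \ref{ass:aWeakDiff} (applied with $p=\bar\varphi-\varphi$), obtaining $\liminf_n\|y_n-\varphi\|_{a_k}^{2}\geq\|\bar\varphi-\varphi\|_{a_k}^{2}$. For the linear term, \ref{ass:WeakDiff} applied to $\varphi_i:=y_n-\bar\varphi$ gives $\spr{j'(\varphi),y_n-\bar\varphi}\to 0$, so $\spr{j'(\varphi),y_n-\varphi}\to\spr{j'(\varphi),\bar\varphi-\varphi}$. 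Combining the two yields $F(\bar\varphi)\leq\liminf F(y_n)=\inf_{\Phiad}F$, proving existence of a minimiser. Uniqueness follows at once since $\|\cdot\|_{a_k}^{2}$ is strictly convex on $\Hil\cap\Dual$ by \ref{ass:aInnerProduct} and the linear part does not affect strict convexity.

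Finally, for the variational characterisation \eqref{var}, I would exploit convexity of $\Phiad$: for $\eta\in\Phiad$ and $t\in(0,1]$, the admissible perturbation $\bar\varphi+t(\eta-\bar\varphi)\in\Phiad$ gives, after expansion of $F$,
\[
 0 \;\leq\; \frac{F(\bar\varphi+t(\eta-\bar\varphi))-F(\bar\varphi)}{t}
 \;=\; a_k(\bar\varphi-\varphi,\eta-\bar\varphi) + \lambda_k\spr{j'(\varphi),\eta-\bar\varphi} + \tfrac{t}{2}\|\eta-\bar\varphi\|_{a_k}^{2}.
\]
Letting $t\downarrow 0$ produces \eqref{var}. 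Conversely, if $\bar\varphi\in\Phiad$ satisfies \eqref{var}, the identity $F(\eta)-F(\bar\varphi) = a_k(\bar\varphi-\varphi,\eta-\bar\varphi)+\lambda_k\spr{j'(\varphi),\eta-\bar\varphi}+\tfrac{1}{2}\|\eta-\bar\varphi\|_{a_k}^{2}$ forces $F(\eta)\geq F(\bar\varphi)$, so $\bar\varphi$ is the unique minimiser, which also yields uniqueness of the solution of \eqref{var}. The only subtle point I foresee is the lower semicontinuity of the $a_k$-norm, where naive weak continuity arguments fail because $a_k$ is only continuous on $\Hil\cap\Dual$ and not on $\Hil$ alone; the polarisation trick plus \ref{ass:aWeakDiff} is precisely what is designed to circumvent this.
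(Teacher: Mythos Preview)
Your proof is correct and follows essentially the same route as the paper: the direct method with coercivity from \ref{ass:aCoercive} and \ref{ass:Bdd}, extraction of a weak/weak-$*$ limit in $\Phiad$ via Lemma~\ref{lem:wsconv}, lower semicontinuity of the quadratic part through the polarisation inequality together with \ref{ass:aWeakDiff}, strict convexity for uniqueness, and the first-order characterisation from convexity of $\Phiad$. One small wrinkle: assumption \ref{ass:aWeakDiff} literally requires $p\in\Phiad$, so instead of invoking it with $p=\bar\varphi-\varphi$ you should expand $a_k(\bar\varphi-\varphi,y_n-\varphi)$ by bilinearity and apply \ref{ass:aWeakDiff} separately with $p=\bar\varphi$ and $p=\varphi$ (both in $\Phiad$); the paper sidesteps this by first rewriting the objective as $g_k(y)=\tfrac12 a_k(y,y)+\langle b_k,y\rangle$ and polarising around $\bar\varphi$ directly.
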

\begin{proof}
Let $k\in\N_0$ and $\varphi \in \Phiad$ arbitrary. Problem \eqref{eq:projproblvm2} is equivalent to 
\begin{align}\label{eq:projprobl2vm}
	\min_{y \in \Phiad } \quad& g_k( y ) := \tfrac{1}{2}a_{k}( y  , y ) + \spr{b_k, y }
\end{align}
where  $\spr{b_k, y } := \lambda_k \spr{j'(\varphi),y } - a_{k}(\varphi, y )$ and $b_k \in (\Hil\cap \Dual)^*$ due to \ref{ass:Diff} and \ref{ass:aBdd}.
By \ref{ass:Bdd} and \ref{ass:aCoercive} we get for any $y \in\Phiad$ with some generic $C>0$ 
\begin{align}
	g_k(y ) &\geq \frac{c_1}{2}\| y \|^2_{\Hil} - 
\|b_k\|_{(\Hil\cap \Dual)^*} (\| y \|_{\Hil}+\underbrace{\| y \|_{\Dual}}_{\leq C}) \geq -C. \label{eq:gbddbelowvm}
\end{align}
Thus $g_k$ is $\Hil$-coercive and bounded from below on $\Phiad$. Hence we can choose an infimizing sequence 
$\varphi_i\in \Phiad$, such that $g_k(\varphi_i)\xrightarrow{i\to\infty} \inf_{y \in\Phiad} g_k(y )$. 
From the estimate \eqref{eq:gbddbelowvm} we conclude that $\{\varphi_i\}_i$ is bounded in $\Hil$. Therefore, 
we can extract a subsequence (still denoted by $\varphi_i$) which converges weakly in $\Hil$ to some 
$\bar \varphi\in \Hil$. Since $\Phiad$ is convex and closed in $\Hil$, 
it is also weakly closed in $\Hil$ and thus $\bar \varphi \in \Phiad$. 
By Lemma \ref{lem:wsconv} we also get $\varphi_i\to \bar \varphi$ weakly-* in $\Dual$. 
Finally we show $g_k(\bar \varphi)=\inf_{y\in \Phiad} g_k(y)$. 
Using \ref{ass:WeakDiff}, \ref{ass:aCoercive} and \ref{ass:aWeakDiff} 
one can show that $\liminf_i a_k(\varphi_i,\varphi_i) \geq a_k(\bar \varphi,\bar \varphi)$ 
and $\lim_i \spr{b_k,\varphi_i}=\spr{b_k,\bar \varphi}$, thus $\liminf_i g_k(\varphi_i) \geq g_k(\bar \varphi)$. 
We conclude
\begin{align*}
\inf_{y \in \Phiad} g_k(y ) \leq g_k(\bar \varphi) \leq \liminf_i g_k(\varphi_i) = \inf_{y \in \Phiad} g_k(y),
\end{align*}
which shows the existence of a minimizer of \eqref{eq:projprobl2vm}. Using \ref{ass:aCoercive}, 
the uniqueness follows from strict convexity of $g_k$.\\
Due to \ref{ass:Diff} and \ref{ass:aBdd}, we have that $g_k$ is differentiable in $\Hil\cap\Dual$, where its directional derivative at $\bar \varphi$ in direction $\eta-\bar \varphi$ for arbitrary $\eta\in\Phiad$ is given by
\begin{align*}
	\spr{g_k'(\bar \varphi),\eta-\bar \varphi} &= a_{k}(\bar \varphi-\varphi,\eta-\bar \varphi) + 
\lambda_k\spr{j'(\varphi),\eta-\bar \varphi} \;.
\end{align*}
Since the problem \eqref{eq:projproblvm2} is convex, it is equivalent to the first order optimality condition, which is given by the variational inequality \eqref{var}, see
\cite{Trol}.
\end{proof}
We see that $\varphi\in\Phiad$ is a stationary point of $j$, i.e. $\spr{j'(\varphi),\eta-\varphi}\geq 0$ $\forall\eta\in\Phiad$, if and only if
$\overline\varphi=\varphi$ is the solution of \eqref{var}, i.e. the fixed point equation $\varphi=\mathcal P_k(\varphi)$ is fulfilled.
This leads to the classical view of the method as a fixed point iteration $\varphi_{k+1}=\mathcal P_k(\varphi_k)$ in the case that $\mathcal P_k$ is independent of $k$ and $\alpha_k=1$ is chosen.
\begin{cor}\label{cor:vkzstat}
If there exists some $k\in \N_0$ with $\mathcal P_k(\varphi) = \varphi$ then
$\varphi$ is a stationary point of $j$.
On the other hand, if $\varphi\in\Phiad$ is a stationary point of $j$ then 
the fix point equation $\mathcal P_k(\varphi) = \varphi$ holds
for all $k\in\N_0$.
In particular, an iterate $\varphi_k$ of the algorithm is a stationary point of $j$ if and only if 
$v_k= \mathcal P_k(\varphi_k)-\varphi_k = 0$. 
\end{cor}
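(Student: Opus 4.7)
My plan is to derive both implications directly from the variational inequality \eqref{var} characterizing $\mathcal P_k(\varphi)$ in Theorem \ref{thm:Projection}, together with the uniqueness of its solution and the strict positivity of $\lambda_k$ guaranteed by assumption \ref{ass:Lambda}.

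For the first implication, suppose $\mathcal P_k(\varphi) = \varphi$ for some $k$. Substituting $\bar\varphi = \varphi$ into \eqref{var}, the term $a_k(\bar\varphi - \varphi, \eta - \bar\varphi)$ vanishes, leaving $\lambda_k \spr{j'(\varphi), \eta - \varphi} \geq 0$ for all $\eta \in \Phiad$. Dividing by $\lambda_k > 0$ yields the stationarity condition $\spr{j'(\varphi), \eta - \varphi} \geq 0$ for all $\eta \in \Phiad$.

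For the converse, assume $\varphi \in \Phiad$ is stationary, fix $k \in \N_0$, and observe that $\bar\varphi := \varphi$ trivially satisfies \eqref{var}: the $a_k$-term vanishes and the remaining inequality $\lambda_k \spr{j'(\varphi), \eta - \varphi} \geq 0$ is precisely the stationarity of $\varphi$ scaled by the positive constant $\lambda_k$. Since Theorem \ref{thm:Projection} guarantees uniqueness of the solution to \eqref{var}, we conclude $\mathcal P_k(\varphi) = \varphi$.

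The final assertion about iterates follows by specializing $\varphi = \varphi_k$: the equivalence $v_k = 0 \Leftrightarrow \mathcal P_k(\varphi_k) = \varphi_k$ holds by definition of $v_k$, and this is in turn equivalent to stationarity of $\varphi_k$ by the two implications above. There is no real obstacle here; the corollary is essentially a bookkeeping consequence of the first order optimality characterization in Theorem \ref{thm:Projection}, with the only subtlety being that one invokes $\lambda_k > 0$ from \ref{ass:Lambda} to pass between the scaled and unscaled stationarity inequalities.
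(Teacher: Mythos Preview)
Your argument is correct and matches the paper's own reasoning: the paper states just before the corollary that $\varphi$ is stationary if and only if $\bar\varphi=\varphi$ solves \eqref{var}, which is exactly the variational-inequality argument you spell out. The only minor remark is that $\lambda_k>0$ is already part of the setup of \eqref{eq:projproblvm} rather than something that needs \ref{ass:Lambda} specifically, but this does not affect the validity of your proof.
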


The variational inequality (\ref{var}) tested with $\eta= \varphi\in \Phiad$ together with \ref{ass:aCoercive} and \ref{ass:Lambda} yields that 
$\mathcal P_k(\varphi)-\varphi$ is a descent direction for $j$:
\begin{lemma}\label{lem:descdirvm}
	Let $k\in\N_0$, $\varphi\in\Phiad$ and $v:=\mathcal P_k(\varphi)-\varphi$. Then it holds
	\begin{align}\label{eq:descdirvm}
		\spr{j'(\varphi),v}\ \leq -\frac {c_1} {\lambda_{max}} \|v\|^2_{\Hil}.
	\end{align}\qed
\end{lemma}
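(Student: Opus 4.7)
The proof plan is essentially dictated by the hint: test the variational inequality \eqref{var} from Theorem \ref{thm:Projection} with $\eta = \varphi \in \Phiad$, which is admissible. Since $\bar\varphi - \varphi = v$ and $\eta - \bar\varphi = \varphi - \bar\varphi = -v$, this substitution immediately yields
\begin{align*}
-a_k(v,v) - \lambda_k \spr{j'(\varphi),v} \geq 0,
\end{align*}
or equivalently $\lambda_k \spr{j'(\varphi),v} \leq -\|v\|_{a_k}^2$.

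Next I invoke the $\Hil$-coercivity assumption \ref{ass:aCoercive}, which gives $\|v\|_{a_k}^2 \geq c_1 \|v\|_\Hil^2$. Combining with the previous display and dividing by $\lambda_k > 0$ (which is legal by \ref{ass:Lambda}) produces
\begin{align*}
\spr{j'(\varphi),v} \leq -\frac{c_1}{\lambda_k}\|v\|_\Hil^2.
\end{align*}
Since the right-hand side is non-positive and $\lambda_k \leq \lambda_{max}$ by \ref{ass:Lambda}, the coefficient satisfies $-c_1/\lambda_k \leq -c_1/\lambda_{max}$, and \eqref{eq:descdirvm} follows.

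There is essentially no obstacle here; the statement is a one-line consequence of the variational inequality once the correct test function has been identified. The only subtle point worth flagging is the final monotonicity step: one must observe that the right-hand side is already $\leq 0$ before replacing $\lambda_k$ by its upper bound $\lambda_{max}$, since otherwise the inequality would flip. No further structural assumption (reflexivity, weak continuity of $j'$, etc.) is used beyond \ref{ass:aCoercive}, \ref{ass:Lambda} and the already-established variational characterization \eqref{var}.
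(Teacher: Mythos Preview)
Your proof is correct and follows exactly the approach the paper indicates: test the variational inequality \eqref{var} with $\eta=\varphi$, apply \ref{ass:aCoercive}, and use \ref{ass:Lambda}. The paper in fact omits the details entirely (the lemma carries a \qed{} after only the hint), so your write-up simply fills in the one-line computation the authors leave to the reader.
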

Note that \eqref{eq:descdirvm} does not hold in the $\Hil\cap\Dual$-norm.\\
Due to $\spr{j'(\varphi),v}<0$ for $v\neq 0$ the step length selection by the Armijo rule (see step \ref{eq:armijo} in Algorithm \ref{algorithm1}) is well defined, which can be shown as in \cite{Bertsekas}.
\begin{remark}\label{rem:gradpathnotpossible}
For the existence of a step length and for the global convergence proof we exploit that the path $\alpha\mapsto \varphi_k + \alpha v_k$ is continuous in $\Hil\cap \Dual$. Thus, also the mapping $\alpha\mapsto j(\varphi_k + \alpha v_k)$ is continuous. On the other hand, this does not hold for the gradient path. Backtracking along the gradient path or projection arc means that $\alpha_k$ is set to 1, whereas $\lambda_k=\beta^{m_k}$ is chosen with $m_k\in\N_0$ minimal such that the Armijo condition
\begin{align*}
	j(\overline\varphi_k(\lambda_k))\leq j(\varphi_k) + \sigma \spr{j'(\varphi_k),\overline\varphi_k(\lambda_k)-\varphi_k}
\end{align*}
is satisfied, see for instance \cite{KelleySachs92}. By the notation $\overline\varphi_k(\lambda_k)$ we emphasize that the solution of the subproblem \eqref{eq:projproblvm} depends on $\lambda_k$. However, with the above assumptions it cannot be shown that there exists such a $\lambda_k$. The reason is that due to \ref{ass:aCoercive} the gradient path $\lambda\mapsto \overline\varphi_k(\lambda)$ is continuous with respect to the $\Hil$-norm, whereas $j$ is due to \ref{ass:Diff} only differentiable with respect to the $\Hil\cap\Dual$-norm. Thus, $j$ along the gradient path, i.e. the mapping $\lambda\mapsto j(\overline\varphi_k(\lambda))$, may be discontinuous.
\end{remark}
To prove statement 2. of Theorem \ref{thm:GlobalConvvm} we use, as in \cite{Bertsekas} for finite dimensions, that $v_k$ is gradient related. This is weaker than
the common angle condition. Therefor we need the following
two lemmata:
\begin{lemma}\label{lem:Tech}
For $\{\varphi_k\}_k \subseteq\Phiad$ with $\varphi_k\to\varphi$ in $\Hil\cap\Dual$ and $\{p_k\}_k \subseteq \Hil\cap\Dual$ with  $p_k\to p$ 
weakly in $\Hil$ and weakly-* in $\Dual$ for some $\varphi,p\in\Hil\cap\Dual$ it holds
$\spr{j'(\varphi_k),p_k}\to \spr{j'(\varphi),p}$.
\end{lemma}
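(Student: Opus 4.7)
The strategy is the standard splitting
\begin{align*}
\spr{j'(\varphi_k),p_k} - \spr{j'(\varphi),p} = \spr{j'(\varphi_k)-j'(\varphi),p_k} + \spr{j'(\varphi),p_k-p},
\end{align*}
and to drive each summand to zero separately, using \ref{ass:Diff} for the first and \ref{ass:WeakDiff} for the second.

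First I would dispatch the easy term: since $p_k-p\to 0$ weakly in $\Hil$ and weakly-* in $\Dual$, assumption \ref{ass:WeakDiff} applied at the fixed point $\varphi\in\Phiad$ gives $\spr{j'(\varphi),p_k-p}\to 0$ directly. For the first term, the idea is to control it by the product of $\|j'(\varphi_k)-j'(\varphi)\|_{(\Hil\cap\Dual)^*}$ and a uniform bound on $\|p_k\|_{\Hil\cap\Dual}$. The norm-difference of the derivatives vanishes by assumption \ref{ass:Diff}, since $j$ is continuously differentiable on a $\Hil\cap\Dual$-neighbourhood of $\Phiad$ and $\varphi_k\to\varphi$ in $\Hil\cap\Dual$.

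It remains to justify that $\{p_k\}$ is bounded in $\Hil\cap\Dual$. Weak convergence in $\Hil$ yields boundedness in $\Hil$ by the uniform boundedness principle applied to the evaluation functionals on $\Hil^*$. Weak-* convergence in $\Dual=\mathbbm B^*$ likewise gives boundedness in $\Dual$ by Banach--Steinhaus on $\mathbbm B$. Hence $\|p_k\|_{\Hil\cap\Dual}\leq C$, and the estimate
\begin{align*}
|\spr{j'(\varphi_k)-j'(\varphi),p_k}| \leq \|j'(\varphi_k)-j'(\varphi)\|_{(\Hil\cap\Dual)^*}\,\|p_k\|_{\Hil\cap\Dual}
\end{align*}
closes the argument.

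I do not expect any serious obstacle: the decomposition is forced once one notices that \ref{ass:WeakDiff} is formulated with the second argument varying and the first fixed, so the mismatch in the first slot must be absorbed through continuity of $j'$ in $(\Hil\cap\Dual)^*$ rather than through any weak continuity. The only point that requires a moment of care is confirming $\Dual$-boundedness of $\{p_k\}$, which relies on the separability of $\mathbbm B$ built into \ref{ass:LimitsUnique} to put us in a genuine dual-space setting where Banach--Steinhaus applies.
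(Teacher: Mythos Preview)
Your proof is correct and follows essentially the same route as the paper: the same splitting, \ref{ass:Diff} for the continuity of $j'$ on the first summand, \ref{ass:WeakDiff} for the second, and boundedness of $\{p_k\}$ in $\Hil\cap\Dual$. The paper simply asserts $\|p_k\|_{\Hil\cap\Dual}\leq C$ without detail, whereas you supply the uniform-boundedness justification; note, incidentally, that Banach--Steinhaus on $\mathbbm B$ requires only completeness, not separability.
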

\begin{proof}
We use \ref{ass:Diff} and \ref{ass:WeakDiff} and obtain
\begin{align*}
	&|\spr{j'(\varphi_k),p_k}- \spr{j'(\varphi),p}|\leq |\spr{j'(\varphi_k)-j'(\varphi),p_k}| + | \spr{j'(\varphi),p_k-p}| \leq\\
	&\leq \underbrace{\|j'(\varphi_k)-j'(\varphi)\|_{(\Hil\cap \Dual)^*}}_{\to 0}\underbrace{\|p_k\|_{\Hil\cap \Dual}}_{\leq C} + \underbrace{| \spr{j'(\varphi),p_k-p}|}_{\to 0}\to 0. \qedhere
\end{align*}
\end{proof}
The preceding lemma is also needed in the proof of Theorem \ref{thm:GlobalConvvm}.

\begin{lemma}\label{thm:PHBdd}
Let for a sequence $\{\varphi_i\}_i\subseteq \Phiad$ hold $\varphi_i\to \varphi$ in $\Hil\cap \Dual$ for some $\varphi\in\Hil\cap\Dual$. 
Then there exists $C>0$ such that $\|\mathcal P_{k}(\varphi_i)\|_{\Hil\cap\Dual}\leq C$ for all $i,k\in\N_0$.
\end{lemma}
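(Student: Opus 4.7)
The plan is to use the variational characterisation of $\bar\varphi_{i,k} := \mathcal P_k(\varphi_i)$ as the minimiser of the strictly convex functional
\[
y\mapsto \tfrac{1}{2}\|y-\varphi_i\|_{a_k}^2 + \lambda_k\langle j'(\varphi_i),y-\varphi_i\rangle
\]
over $\Phiad$, and to obtain a uniform bound by comparing the cost at $\bar\varphi_{i,k}$ with the cost at the admissible competitor $\varphi_i$ itself, whose cost is $0$. The boundedness in $\Dual$ will be free from \ref{ass:Bdd}; only the $\Hil$-part needs actual work.

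First I would record the two ingredients that make all constants uniform in $i$ and $k$. From \ref{ass:Bdd}, $\sup_i\|\varphi_i\|_{\Dual}$ and $\sup_{i,k}\|\bar\varphi_{i,k}-\varphi_i\|_{\Dual}$ are bounded by some $C_{\Dual}$. From $\varphi_i\to\varphi$ in $\Hil\cap\Dual$ and the $C^1$-regularity of $j$ in a neighbourhood of $\Phiad$ (assumption \ref{ass:Diff}), the sequence $j'(\varphi_i)$ converges in $(\Hil\cap\Dual)^*$, so $M:=\sup_i\|j'(\varphi_i)\|_{(\Hil\cap\Dual)^*}<\infty$. Also $\sup_i\|\varphi_i\|_{\Hil}<\infty$.

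Next, the comparison step. Optimality of $\bar\varphi_{i,k}$ and admissibility of $\varphi_i$ give
\[
\tfrac{1}{2}\|\bar\varphi_{i,k}-\varphi_i\|_{a_k}^2 \leq -\lambda_k\langle j'(\varphi_i),\bar\varphi_{i,k}-\varphi_i\rangle.
\]
Applying \ref{ass:aCoercive} to the left, and Hölder's inequality together with \ref{ass:Lambda} to the right, yields
\[
\tfrac{c_1}{2}\|\bar\varphi_{i,k}-\varphi_i\|_{\Hil}^2 \leq \lambda_{\max} M\bigl(\|\bar\varphi_{i,k}-\varphi_i\|_{\Hil}+C_{\Dual}\bigr).
\]
Setting $t=\|\bar\varphi_{i,k}-\varphi_i\|_{\Hil}$, this is a quadratic inequality $\tfrac{c_1}{2}t^2 - \lambda_{\max}Mt - \lambda_{\max}MC_{\Dual}\leq 0$ whose positive root provides a bound $t\leq R$ depending only on the uniform constants $c_1,\lambda_{\max},M,C_{\Dual}$, and hence independent of $i$ and $k$.

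Finally, the triangle inequality gives $\|\bar\varphi_{i,k}\|_{\Hil}\leq R+\sup_i\|\varphi_i\|_{\Hil}$, and $\|\bar\varphi_{i,k}\|_{\Dual}\leq C_{\Dual}$ comes directly from \ref{ass:Bdd} since $\bar\varphi_{i,k}\in\Phiad$. Adding the two estimates yields the desired uniform bound in $\Hil\cap\Dual$. The only subtle point — really the only place one could slip — is verifying that the constant $M$ is indeed uniform: this requires the observation that $\varphi_i$ is convergent (hence relatively compact) and that $j'$ is continuous on a neighbourhood of $\Phiad$, which together upgrade the pointwise defined $j'(\varphi_i)$ into a bounded sequence in $(\Hil\cap\Dual)^*$. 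All other steps are direct consequences of the assumptions already collected in \ref{ass:LimitsUnique}--\ref{ass:Lambda}.
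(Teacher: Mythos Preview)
Your proof is correct and follows essentially the same route as the paper's: both obtain the key inequality by testing the optimality of $\mathcal P_k(\varphi_i)$ against the competitor $\varphi_i$ (the paper packages this step as Lemma~\ref{lem:descdirvm}, you use the minimisation formulation directly, which only costs an irrelevant factor $\tfrac12$), then bound the right-hand side using the uniform bound on $\|j'(\varphi_i)\|_{(\Hil\cap\Dual)^*}$ from \ref{ass:Diff} together with \ref{ass:Bdd}, solve the resulting quadratic inequality in $\|\bar\varphi_{i,k}-\varphi_i\|_\Hil$, and finish with \ref{ass:Bdd} for the $\Dual$-part.
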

\begin{proof}
Lemma \ref{lem:descdirvm} yields together with \ref{ass:Bdd} and \ref{ass:Diff} the estimate
\begin{align*}
	\tfrac{c_1}{\lambda_{max}}\|\mathcal P_{k}(\varphi_i)-\varphi_i\|^2_\Hil&\leq -\spr{j'(\varphi_i),\mathcal P_{k}(\varphi_i)-\varphi_i}\\
	&\leq \|j'(\varphi_i)\|_{(\Hil\cap\Dual)^*}(\|\mathcal P_{k}(\varphi_i)-\varphi_i\|_\Hil + \|\mathcal P_{k}(\varphi_i)-\varphi_i\|_\Dual)\\
	&\leq C(\|\mathcal P_{k}(\varphi_i)-\varphi_i\|_\Hil+1),
\end{align*}
thus $\|\mathcal P_{k}(\varphi_i)-\varphi_i\|_\Hil \leq C$ and hence $\|\mathcal P_{k}(\varphi_i)\|_\Hil\leq C$. Due to \ref{ass:Bdd} we finally get $\|\mathcal P_{k}(\varphi_i)\|_{\Hil\cap\Dual}\leq C$ independent of $i$ and $k$.
\end{proof}

\begin{lemma}\label{thm:gradrelated}
Let $\{\varphi_k\}$ be the sequence generated by Algorithm \ref{algorithm1}, then $\{ v_k\}_k$ is gradient related, i.e.:
for any subsequence $\{ \varphi_{k_i} \}_i$ which converges in $\Hil\cap \Dual$ to a nonstationary point
$\varphi \in \Phiad$ of $j$, the corresponding subsequence of search directions $\{ v_{k_i}\}_i$ 
is bounded in $\Hil\cap \Dual$ and $\limsup_i \spr{j'(\varphi_{k_i}),v_{k_i}} < 0$ is satisfied.
Moreover, it holds $\liminf_i \|v_{k_i}\|_{\Hil}> 0$. 
\end{lemma}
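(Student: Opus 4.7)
The plan is to treat the three assertions in order: boundedness of $\{v_{k_i}\}$ in $\Hil\cap\Dual$ is immediate, the strict negativity of $\limsup_i\spr{j'(\varphi_{k_i}),v_{k_i}}$ will follow at the end from Lemma \ref{lem:descdirvm} once the $\liminf$ assertion has been established, and the $\liminf$ assertion is the heart of the argument and will be proved by contradiction.

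For the boundedness, I would apply Lemma \ref{thm:PHBdd} to the convergent (hence bounded in $\Hil\cap\Dual$) sequence $\{\varphi_{k_i}\}$ to conclude that $\{\mathcal P_{k_i}(\varphi_{k_i})\}$ is bounded in $\Hil\cap\Dual$, so $\{v_{k_i}\}=\{\mathcal P_{k_i}(\varphi_{k_i})-\varphi_{k_i}\}$ is bounded there as well. For the third statement I would invoke Lemma \ref{lem:descdirvm} to get $\spr{j'(\varphi_{k_i}),v_{k_i}}\leq -\frac{c_1}{\lambda_{max}}\|v_{k_i}\|_\Hil^2$, so that $\limsup_i\spr{j'(\varphi_{k_i}),v_{k_i}}\leq -\frac{c_1}{\lambda_{max}}(\liminf_i\|v_{k_i}\|_\Hil)^2<0$ as soon as the $\liminf$ is strictly positive.

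The bulk of the work is the claim $\liminf_i\|v_{k_i}\|_\Hil>0$. I would suppose for contradiction that along a further subsequence (still indexed by $k_i$) $v_{k_i}\to 0$ strongly in $\Hil$, and derive that $\varphi$ is stationary. First, since $\{v_{k_i}\}$ is bounded in $\Dual$, weak-$*$ compactness (from \ref{ass:LimitsUnique}) together with \ref{ass:LimitsUnique}'s unique-limit property force $v_{k_i}\to 0$ weakly-$*$ in $\Dual$ (a standard subsequence argument shows the whole sequence converges). Consequently $\mathcal P_{k_i}(\varphi_{k_i})=v_{k_i}+\varphi_{k_i}\to\varphi$ strongly in $\Hil$ and weakly-$*$ in $\Dual$. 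After passing to one more subsequence I may assume $\lambda_{k_i}\to\bar\lambda\in[\lambda_{min},\lambda_{max}]$ by \ref{ass:Lambda}. Now I would write the variational inequality \eqref{var} for $\bar\varphi=\mathcal P_{k_i}(\varphi_{k_i})$ at a fixed but arbitrary test element $\eta\in\Phiad$, and split the bilinear-form term via $\eta-\mathcal P_{k_i}(\varphi_{k_i})=(\eta-\varphi_{k_i})-v_{k_i}$, obtaining
\begin{align*}
a_{k_i}(v_{k_i},\eta-\varphi_{k_i})-a_{k_i}(v_{k_i},v_{k_i})+\lambda_{k_i}\spr{j'(\varphi_{k_i}),\eta-\mathcal P_{k_i}(\varphi_{k_i})}\geq 0.
\end{align*}
The first summand tends to $0$ by \ref{ass:aToZ}, applied with $p_i=v_{k_i}$ (strongly to $0$ in $\Hil$, weakly-$*$ in $\Dual$) and $y_i=\eta-\varphi_{k_i}$ (converging in $\Hil\cap\Dual$). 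For the second summand I would first test the variational inequality with $\eta=\varphi_{k_i}$ to get $\|v_{k_i}\|_{a_{k_i}}^2\leq -\lambda_{k_i}\spr{j'(\varphi_{k_i}),v_{k_i}}$, and then apply Lemma \ref{lem:Tech} to $\spr{j'(\varphi_{k_i}),v_{k_i}}\to\spr{j'(\varphi),0}=0$, which forces $a_{k_i}(v_{k_i},v_{k_i})\to 0$. For the third summand, Lemma \ref{lem:Tech} again (now with $\eta-\mathcal P_{k_i}(\varphi_{k_i})\to\eta-\varphi$ weakly in $\Hil$ and weakly-$*$ in $\Dual$) yields the limit $\bar\lambda\spr{j'(\varphi),\eta-\varphi}$. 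Passing to the limit gives $\bar\lambda\spr{j'(\varphi),\eta-\varphi}\geq 0$ for every $\eta\in\Phiad$, contradicting non-stationarity of $\varphi$.

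The main obstacle I anticipate is controlling the term $a_{k_i}(v_{k_i},v_{k_i})$: the assumption \ref{ass:aBdd} provides only a $k$-dependent constant, so one cannot bound the bilinear form uniformly by the $\Hil\cap\Dual$-norm; this is precisely why the indirect route via the variational inequality (tested at $\eta=\varphi_{k_i}$) combined with Lemma \ref{lem:Tech} is necessary, and why the hypothesis \ref{ass:aToZ} is tailored to sequences that merely converge weakly-$*$ in $\Dual$.
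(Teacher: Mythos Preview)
Your proposal is correct and follows essentially the same route as the paper: boundedness via Lemma~\ref{thm:PHBdd}, then a contradiction argument that passes to the limit in the variational inequality \eqref{var} using \ref{ass:aToZ} and Lemma~\ref{lem:Tech}, and finally the $\limsup$ conclusion via Lemma~\ref{lem:descdirvm}. The only noteworthy difference is that the paper handles the quadratic term more cheaply: rewriting \eqref{var} as
\[
\spr{j'(\varphi_{k_i}),\eta-\bar\varphi_{k_i}} \;\geq\; \tfrac{1}{\lambda_{k_i}}\bigl(a_{k_i}(v_{k_i},v_{k_i}) + a_{k_i}(v_{k_i},\varphi_{k_i}-\eta)\bigr) \;\geq\; -\tfrac{1}{\lambda_{min}}\,|a_{k_i}(v_{k_i},\varphi_{k_i}-\eta)|,
\]
the nonnegative term $a_{k_i}(v_{k_i},v_{k_i})$ is simply dropped, so there is no need for your separate argument that it tends to zero, and no need to extract a convergent subsequence of $\{\lambda_{k_i}\}$. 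Your detour through $\|v_{k_i}\|_{a_{k_i}}^2\leq -\lambda_{k_i}\spr{j'(\varphi_{k_i}),v_{k_i}}\to 0$ is valid but unnecessary.
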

\begin{proof}
Let $\varphi_{k_i}\to \varphi$ in $\Hil\cap\Dual$, where $\varphi$ is nonstationary.
Lemma \ref{thm:PHBdd} provides that $\{v_{k_i}\}_i $ 
is bounded in $\Hil\cap\Dual$. 
With \eqref{eq:descdirvm}, the statement $\limsup_i \spr{j'(\varphi_{k_i}),v_{k_i}} < 0$  follows from
$\liminf_i \|v_{k_i}\|_{\Hil}= C> 0$, which we show by contradiction.\\
Assume $\liminf_i \|v_{k_i}\|_{\Hil} = 0$, thus there is a subsequence again denoted by $\{v_{k_i}\}_i $ such that 
$v_{k_i}\to 0$ in $\Hil$. 
Using \eqref{var} for $\bar\varphi_k:= \mathcal P_{k}(\varphi_k)$, the positive definiteness of $a_k$ and \ref{ass:Lambda}, it follows for all $\eta \in \Phiad $
\begin{align}
\spr{j'(\varphi_{k}),\eta-\bar\varphi_{k}} &\geq 
\tfrac{1}{\lambda_k} (a_k(v_k,v_k) +  a_k(v_k,\bar\varphi_k-v_k -\eta))\nonumber \\
&\geq  -\tfrac{1}{\lambda_{min}} |a_k(v_k,\bar\varphi_k-v_k -\eta)| \; . \label{eq:fstorderki}
\end{align}
Moreover, $\bar \varphi_{k_i} = v_{k_i}+\varphi_{k_i}\to \varphi$ in $\Hil$ and 
also weakly-* in $\Dual$ according to Lemma \ref{lem:wsconv}. 
From Lemma \ref{lem:Tech} we get $\spr{j'(\varphi_{k_i}),\eta-\bar\varphi_{k_i}}\to \spr{j'(\varphi),\eta-\varphi}$. 
From \ref{ass:aToZ} we get $a_{{k_i}}(\bar \varphi_{k_i}-\varphi_{k_i}, \varphi_{k_i}-\eta)\to 0$ and we derive from \eqref{eq:fstorderki} that
\begin{align*}
	\spr{j'(\varphi),\eta-\varphi} \geq 0\quad \forall \eta\in\Phiad,
\end{align*}
which shows that $\varphi$ is stationary, which is a contradiction.
\end{proof}
\label{pageofproof}\begin{proof}[Proof of Theorem \ref{thm:GlobalConvvm}]\ \\
Because of Corollary \ref{cor:vkzstat} we can assume $v_k\neq 0$ and $\alpha_k>0$ for all $k$.\\
1.) From the Armijo rule and since $v_k$ is a descent direction we get 
\begin{align}\label{eq:armijo2}
 	j(\varphi_{k+1})-j(\varphi_k) \leq  \alpha_k \sigma \spr{j'(\varphi_k),v_k} < 0,
\end{align}
thus $j(\varphi_k)$ is monotonically decreasing. Since $j$ is bounded from below
we get convergence $j(\varphi_k)\to j^*$ for some $j^*\in\R$, which proves 1. \\[2mm]
2.)
The proof is similar to \cite{Bertsekas} in finite dimension by contradiction. Let $\varphi$ be an accumulation point, with a convergent subsequence $\varphi_{k_i}\to \varphi$ in $\Hil\cap\Dual$.
The continuity of $j$ on $\Phiad$ yields then $j^* = j(\varphi)$ and \eqref{eq:armijo2}
leads to $\alpha_k \spr{j'(\varphi_k),v_k}\to 0$.
Assuming now that $\varphi$ is nonstationary we have $\left|\spr{j'(\varphi_{k_i}),v_{k_i}}\right| \geq C > 0$, since
$\{v_k\}$ is gradient related by Lemma \ref{thm:gradrelated}, 
and thus $\alpha_{k_i}\to 0$.
So there exists some $\bar i\in\N$ such that $\alpha_{k_i}/\beta\leq 1$ for all $i\geq \bar i$, 
and thus $\alpha_{k_i}/\beta$ does not fulfill the Armijo rule due to the minimality of $m_k$.
Applying the mean value theorem to the left hand side, we have for some nonnegative $\tilde\alpha_{k_i} \leq \frac{\alpha_{k_i}}{\beta} $
and all $ i\geq\bar i$ that
\begin{align}
 	\tfrac{\alpha_{k_i}}{\beta}\spr{j'\left(\varphi_{k_i} + \tilde\alpha_{k_i} v_{k_i}\right),v_{k_i}} = 
	j\left(\varphi_{k_i} + \tfrac{\alpha_{k_i}}{\beta} v_{k_i}\right)-j(\varphi_{k_i}) >  
\tfrac{\alpha_{k_i}}{\beta} \sigma \spr{j'(\varphi_{k_i}),v_{k_i}}
\label{eq:ineq1}
\end{align}
holds. Since, by Lemma \ref{thm:gradrelated}, $\{v_{k_i}\}_i$ is bounded in $\Hil\cap \Dual$ and $\tilde\alpha_{k_i}\to 0$, 
we have that $\varphi_{k_i} + \tilde\alpha_{k_i} v_{k_i}\to \varphi$ in $\Hil\cap \Dual$.
Also $\bar\varphi_{k_i} = \varphi_{k_i} + v_{k_i}$ is uniformly bounded in $\Hil\cap\Dual$ and thus there exists a subsequence, again denoted by $\{\bar\varphi_{k_i}\}$,
which converges to some $y\in\Phiad$ weakly in $\Hil$ and  weakly-* in $\Dual$.
Hence we have that $v_{k_i} = \bar\varphi_{k_i}-\varphi_{k_i}\to \bar v := y-\varphi$ weakly in $\Hil$ and weakly-* in $\Dual$.
According to Lemma \ref{lem:Tech} we can take the limit of both sides of the inequality \eqref{eq:ineq1},
which leads to 
$
 	\spr{j'\left(\varphi\right),\bar v} \geq \sigma \spr{j'\left(\varphi\right),\bar v},
$
and $\sigma < 1$ yields
 $	\spr{j'\left(\varphi\right),\bar v} \geq 0 \; .$
This contradicts 
 $	\spr{j'\left(\varphi\right),\bar v} = \limsup_i\spr{j'(\varphi_{k_i}),v_{k_i}} < 0,$
which is a consequence of Lemma \ref{thm:gradrelated}.\\[2mm]
3.)
By proving that out of 
any subsequence of $\spr{j'(\varphi_{k_i}),v_{k_i}}$ we can extract another subsequence, which converges to 0, we can conlude
that $\spr{j'(\varphi_{k_i}),v_{k_i}}\to 0$ which yields $\| v_{k_i} \|_\Hil \to 0$ by \eqref{eq:descdirvm}.
With Lemma \ref{thm:PHBdd}, we get by the same arguments as in 2. that $v_{k_i} \to y-\varphi$ weakly in $\Hil$ and weakly-* in $\Dual$ for a subsequence and for some $y\in \Phiad$, thus $\spr{j'(\varphi_{k_i}),v_{k_i}}\to \spr{j'(\varphi),y-\varphi}$ due to Lemma \ref{lem:Tech}.
Since $v_{k_i}$ are descent directions for $j$ at $\varphi_{k_i}$ and $\varphi$ is stationary we have
$\spr{j'(\varphi),y-\varphi}=0$.
\\[2mm]
4.)
As in 3.) we prove by a subsequence argument that $\spr{j'(\varphi_k),v_k}\to 0$.
For an arbitrary subsequence, which we also denote by index $k$, \eqref{eq:armijo2} yields 
$\alpha_k\spr{j'(\varphi_k),v_k}\to 0 $. If  $\alpha_k\geq c> 0$ for all $k$, the assertion follows immediately. 
Otherwise there exists a subsequence (again denoted by index $k$) such that $\beta \geq \alpha_k\to 0$ 
and thus the step length $\alpha_k/\beta$ does not fulfill the Armijo condition.
Since $j'$ is Hölder continuous with exponent $\gamma$ and modulus $L$ 
we obtain
\begin{align*}
\sigma\tfrac{\alpha_k}{\beta}\spr{j'(\varphi_k),v_k} &< j(\varphi_k+\tfrac{\alpha_k}{\beta}v_k)-j(\varphi_k) 
= \int_0^1 \tfrac{d}{dt} j(\varphi_k+t \tfrac{\alpha_k}{\beta}v_k) dt \nonumber \\
&\leq \tfrac{\alpha_k}{\beta}\spr{j'(\varphi_k),v_k}+\tfrac{L}{1+\gamma}  
\left(\tfrac{\alpha_k}{\beta}\right)^{1+\gamma}\|v_k\|^{1+\gamma}_{\Hil\cap\Dual}.
\end{align*}
It holds $\|v_k\|_{\Dual}\leq C$ due to \ref{ass:Bdd} and employing \eqref{eq:descdirvm}
we obtain
\begin{align*}
0<(\sigma-1)\spr{j'(\varphi_k),v_k}< C\tfrac{L}{1+\gamma} 
(\tfrac{\alpha_k}{\beta})^\gamma(\|v_k\|^{1+\gamma}_{\Hil}+ 1 )
\leq 
C \alpha_k^\gamma ( |\spr{j'(\varphi_k),v_k}|^{\frac{1+\gamma}{2}}+ 1 ).
\end{align*}
We get
$x_k:= | \spr{j'(\varphi_k),v_k}|\to 0$. Otherwise there exists a subsequence still denoted by $\{ x_k  \} $ 
with $x_k \to \bar c>0 $. 
Rearranging the last inequality gives
$1 < C\alpha_k^\gamma ( x_k^{\frac{-1+\gamma}{2}}+ x_k^{-1} )\to 0$,
	which is a contradiction.
\end{proof}
\begin{remark}
Statements 1. and 2. of Theorem \ref{thm:GlobalConvvm} require only that $\overline\varphi_k\in\Phiad$ is chosen such that the search directions $v_k = \overline\varphi_k-\varphi_k$ are gradient related descent directions, as can be seen in the proof above. Hence $\overline \varphi_k$ does not have to be $\mathcal P_k(\varphi_k)$ in Algorithm \ref{algorithm1}. In this case assumption \ref{ass:Bdd} is also not required.
\end{remark}

\section{An application in structural topology optimization based on a phase field model}\label{sec:Application}
In this section we give an example of an optimization problem described in \cite{bfgs2013}, which is not differentiable in a Hilbert space, so the classical projected gradient method cannot be applied, but the assumptions for the VMPT method are fulfilled.\\
We consider the problem of distributing $N$ materials, each with different elastic properties and fixed volume fraction, within a design domain $\Omega\subseteq \R^d$, $d\in\N$, such that the mean compliance $\int_{\Gamma_g}\bfg\cdot \bfu$ is minimal under the external force $\bfg$ acting on $\Gamma_g\subseteq\partial\Omega$. The displacement field $\bfu: \Omega\to \R^d$ is given as the solution of the equations of linear elasticity \eqref{equ:Elasticity}.
To obtain a well posed problem a perimeter penalization is typically used. 
Using phase fields in topology optimization was introduced by Bourdin and Chambolle \cite{BourdinChambolle}.
Here, the $N$ materials are described by a vector valued phase field $\bfvarphi:\Omega\to\R^N$ with $\bfvarphi\geq 0$ and $\sum_i \varphi_i = 1$, which is able to handle topological changes implicitly. The $i$th material is characterized by $\{\bfvarphi_i=1\}$ and the different materials are separated by a thin interface, whose thickness is controlled by the phase field parameter $\varepsilon>0$.  In the phase field setting the perimeter is approximated by the Ginzburg Landau energy. In \cite{bghr2014} it is shown that the given problem for $N=2$ converges as $\varepsilon\to 0$ in the sense of $\Gamma$-convergence. For further details about the model we refer the reader to \cite{bfgs2013}. The resulting optimal control problem reads with $E(\bfphi):=\int_\Omega\left\{ \frac{\varepsilon}{2}|\nabla \bfvarphi|^2 + \frac{1}{\varepsilon}\psi_0(\bfvarphi)\right\}$
\begin{align}\label{equ:MultPhaseMCP}
	\min \tilde J(\bfvarphi,\bfu) :=  & \int_{\Gamma_g}\bfg\cdot \bfu + \gamma E(\bfvarphi)\\
	\bfvarphi\in H^1(\Omega)^N,\ &\bfu \in H^1_D := \{H^1(\Omega)^d\mid \bfxi|_{\Gamma_D}=0\} \notag\\
	\text{subject to }\quad \int_\Omega \bfC(\bfvarphi)\E(\bfu):\E(\bfxi) &= \int_{\Gamma_g}\bfg\cdot \bfxi \quad\forall \bfxi\in H^1_D \label{equ:Elasticity}\\
	\strokedint_\Omega \bfvarphi = \m, \quad	\bfvarphi &\geq 0, \quad	\sum_{i=1}^{N}\varphi^i \equiv 1 ,\label{equ:Constraints}
\end{align}
where $\gamma>0$ is a weighting factor, $\strokedint_\Omega \bfvarphi:= \frac{1}{|\Omega|}\int_\Omega \bfvarphi$,  $\psi_0:\R^N\to \R$ is the smooth part of the potential forcing the values of $\bfvarphi$ to the standard basis $\bm e_i\in \R^N$, and $A:B := \sum_{i,j=1}^d A_{ij}B_{ij}$ for $A,B\in\R^{d\times d}$. The materials are fixed on the Dirichlet domain $\Gamma_D\subseteq\partial\Omega$. The tensor valued mapping $\bfC:\R^N\to \R^{d\times d}\otimes (\R^{d\times d})^*$ is a suitable interpolation of the stiffness tensors $\bfC(\bm e_i)$ of the different materials and $\E(\bfu) := \frac 1 2 (\nabla\bfu + \nabla \bfu^T)$ is the linearized strain tensor. The prescribed volume fraction of the $i$th material is given by $\mathfrak m_i$. For examples of the functions $\psi_0$ and $\bfC$ we refer to \cite{bfgrs2014,bfgs2013}. Existence of a minimizer of the problem \eqref{equ:MultPhaseMCP} as well as the unique solvability of the state equation \eqref{equ:Elasticity} is shown in \cite{bfgs2013} under the following assumptions, which we claim also in this paper.
\newcounter{AssCountP}
\renewcommand{\theAssCountP}{\textbf{(AP)}}    
\begin{list}{\theAssCountP}{\usecounter{AssCountP}}\setlength{\itemsep}{0pt}
 \item \label{ass:psiC11}\label{ass:CC11}\label{ass:Csym}\label{ass:Ccoercive} 
$\Omega\subseteq\R^d$ is a bounded Lipschitz domain; $\Gamma_D,\Gamma_g\subseteq\partial\Omega$ with $\Gamma_D\cap \Gamma_g = \emptyset$ and $\mathcal H^{d-1}(\Gamma_D)>0$. Moreover, $\bfg \in L^2(\Gamma_g)^d$ and $\psi_0\in C^{1,1}(\R^N)$ as well as $\m\geq 0$, $\sum_{i=1}^N \mathfrak m_i = 1$. For the stiffness tensor we assume $\bfC = (C_{ijkl})_{i,j,k,l=1}^d$ with $C_{ijkl}\in C^{1,1}(\R^N)$ and $C_{ijkl}=C_{jikl}=C_{klij}$ and that there exist $a_0, a_1, C >0$, s.t. $a_0|\bm A|^2\leq \bfC(\bfvarphi)\bm A:\bm A\leq a_1|\bm A|^2$ as well as $|\bfC'(\bfvarphi)|\leq C$ holds for all symmetric matrices $\bm A\in\R^{d\times d}$ and for all $\bfvarphi\in\R^N$.
\end{list}
The state $\bfu$ can be eliminated using the control-to-state operator $S$, resulting in the reduced cost functional $\tilde j(\bfvarphi) := \tilde J(\bfvarphi,S(\bfvarphi))$. In \cite{bfgs2013} it is also shown that $\tilde j:H^1(\Omega)^N\cap L^\infty(\Omega)^N\to \R$ is everywhere \Frechet{} differentiable with derivative
\begin{align}\label{eq:jderiv}
	\tilde j'(\bfphi)\bm v = \gamma \int_\Omega \{ \varepsilon \nabla \bfphi:\nabla \bm v + \frac{1}{\varepsilon}{ \psi_0'}(\bfphi)\bm v \} - \int_\Omega \bfC'(\bfphi)\bm v\E(\bfu):\E(\bfu)
\end{align}
for all $\bfvarphi, \bm v\in H^1(\Omega)^N\cap L^\infty(\Omega)^N$, where $\bfu=S(\bfvarphi)$ and $S:L^\infty(\Omega)^N\to H^1(\Omega)^d$ is \Frechet{} differentiable. 
By the techniques in \cite{bfgs2013} one can also show that $S'$ is continuous.

In \cite{bfgs2013,bgsssv2010} the problem is solved numerically by a pseudo time stepping method with fixed time step, which results from an $L^2$-gradient flow approach. 
An $H^{-1}$ gradient flow approach is also considered in \cite{bgsssv2010}. The drawbacks of these methods are that no convergence results to a stationary point exist, 
and hence also no appropriate stopping criteria are known. In addition, typically the methods are very slow, i.e. many time steps are needed until the changes in the solution $\bfvarphi$ or in $j$ are small. Here we apply the VMPT method, which does not have these drawbacks and which can additionally incorporate second order information.

Since $H^1(\Omega)^N\cap L^\infty(\Omega)^N$ is not a Hilbert space the classical projected gradient method cannot be applied. In the following we show that problem \eqref{equ:MultPhaseMCP} fulfills the assumptions on the VMPT method. Amongst others we use the inner product $a_k(\bff,\bfg) = \int_\Omega \nabla \bff:\nabla \bfg$. To guarantee positive definiteness of this $a_k$ we first have to translate the problem by a constant to gain $\int_\Omega\bfvarphi = 0$, which allows us to apply a \Poincare{} inequality. Therefor we perform a change of coordinates in the form $\tilde\bfvarphi = \bfvarphi-\m$ and get the following problem for the transformed coordinates.
\begin{align}\label{equ:MultPhaseMCPTrans}
	\min j(\bfvarphi) := \int_{\Gamma_g}\bfg\cdot S(\bfvarphi+\m) + \gamma E(\bfvarphi+\m)\\
	\bfvarphi\in \Phiad := \left\{\bfvarphi\in H^1(\Omega)^N\relmiddle \strokedint_\Omega \bfvarphi = 0, \quad	\bfvarphi \geq -\m, \quad	\sum_{i=1}^{N}\varphi^i \equiv 0\right\}.\notag
\end{align}
On the transformed problem \eqref{equ:MultPhaseMCPTrans} we apply the VMPT method in the spaces
\begin{align*}
	\Hil &:= \left\{\bfvarphi\in H^1(\Omega)^N\relmiddle \strokedint_\Omega \bfvarphi = \bm 0\right\},\quad	\Dual := L^\infty(\Omega)^N.
\end{align*}
The space of mean value free functions $\Hil$ becomes a Hilbert space with the inner product $(\bm f,\bm g)_\Hil:=(\nabla \bm f,\nabla \bm g)_{L^2}$ and $\|.\|_\Hil$ is equivalent to the $H^1$-norm \cite{AltFunct}.

\begin{theorem}\label{thm:C1}
The reduced cost functional $j : \Hil\cap \Dual\to \R$ is continuously \Frechet{} differentiable and $j'$ is Lipschitz continuous on $\Phiad$.
\end{theorem}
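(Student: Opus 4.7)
The strategy is to transfer the already established \Frechet{} differentiability of $\tilde j$ to $j$ and then derive the Lipschitz estimate for $j'$ by bounding each of the three structural contributions to $j'(\bfvarphi_1)-j'(\bfvarphi_2)$ separately. Since $j(\bfvarphi)=\tilde j(\bfvarphi+\m)$ and \cite{bfgs2013} supplies the differentiability of $\tilde j$ on $H^1(\Omega)^N\cap L^\infty(\Omega)^N$ together with the explicit formula \eqref{eq:jderiv}, the differentiability of $j$ on $\Hil\cap\Dual$ with $j'(\bfvarphi)\bm v=\tilde j'(\bfvarphi+\m)\bm v$ is immediate. Writing $\bfphi_i:=\bfvarphi_i+\m$ and $\bfu_i:=S(\bfphi_i)$, I would split $(j'(\bfvarphi_1)-j'(\bfvarphi_2))\bm v$ into the linear gradient contribution $\gamma\varepsilon\int_\Omega\nabla(\bfvarphi_1-\bfvarphi_2){:}\nabla\bm v$, the potential contribution $\tfrac{\gamma}{\varepsilon}\int_\Omega(\psi_0'(\bfphi_1)-\psi_0'(\bfphi_2))\bm v$, and the elasticity contribution involving the difference of $\bfC'(\bfphi_i)\bm v\,\E(\bfu_i){:}\E(\bfu_i)$.

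The first two pieces are routine. The gradient piece is trivially bounded by $\gamma\varepsilon\|\bfvarphi_1-\bfvarphi_2\|_\Hil\|\bm v\|_\Hil$, and for the potential piece the hypothesis $\psi_0\in C^{1,1}(\R^N)$ gives Lipschitz continuity of $\psi_0'$, so an $L^1$--$L^\infty$ pairing produces a bound of the form $C\|\bfvarphi_1-\bfvarphi_2\|_\Dual\|\bm v\|_\Dual$ via the boundedness of $\Omega$ and $\|\bm v\|_{L^\infty}=\|\bm v\|_\Dual$.

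The elasticity piece is the main obstacle. I would apply the identity $A{:}A-B{:}B=(A+B){:}(A-B)$ for symmetric tensors and rewrite the contribution as
\begin{align*}
\int_\Omega(\bfC'(\bfphi_1)-\bfC'(\bfphi_2))\bm v\,\E(\bfu_1){:}\E(\bfu_1) + \int_\Omega\bfC'(\bfphi_2)\bm v\,(\E(\bfu_1)+\E(\bfu_2)){:}\E(\bfu_1-\bfu_2).
\end{align*}
The first summand is controlled using Lipschitz continuity of $\bfC'$ (from $C_{ijkl}\in C^{1,1}(\R^N)$), the $L^\infty$-boundedness of $\bm v$, and a uniform $L^2$-bound on $\|\E(\bfu)\|_{L^2}$ for $\bfu=S(\bfphi)$ over $\Phiad+\m$; the latter is obtained by testing \eqref{equ:Elasticity} with $\bfxi=\bfu$, exploiting coercivity of $\bfC$, using the data bound $\bfg\in L^2(\Gamma_g)^d$, and invoking Korn's inequality on $H^1_D$ (which applies since $\mathcal H^{d-1}(\Gamma_D)>0$). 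The second summand requires Lipschitz continuity of the control-to-state map $S:\Dual\to H^1_D$ on $\Phiad+\m$. I would establish this directly by subtracting the state equations to obtain
\begin{align*}
\int_\Omega \bfC(\bfphi_1)\E(\bfu_1-\bfu_2){:}\E(\bfxi) = \int_\Omega(\bfC(\bfphi_2)-\bfC(\bfphi_1))\E(\bfu_2){:}\E(\bfxi)\quad\forall\bfxi\in H^1_D,
\end{align*}
testing with $\bfxi=\bfu_1-\bfu_2$, and absorbing the left side by coercivity while estimating the right side with $|\bfC(\bfphi_2)-\bfC(\bfphi_1)|\leq C\|\bfvarphi_1-\bfvarphi_2\|_\Dual$ (from $|\bfC'|\leq C$) and the uniform bound on $\|\E(\bfu_2)\|_{L^2}$. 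This yields $\|\E(\bfu_1-\bfu_2)\|_{L^2}\leq C\|\bfvarphi_1-\bfvarphi_2\|_\Dual$, which Korn upgrades to $\|\bfu_1-\bfu_2\|_{H^1}\leq C\|\bfvarphi_1-\bfvarphi_2\|_\Dual$. Summing the three bounds and using the $\Dual$-boundedness of $\Phiad$ to absorb the remaining uniform constants produces the desired Lipschitz estimate for $j'$ on $\Phiad$ with respect to $\|\cdot\|_{\Hil\cap\Dual}$; continuous \Frechet{} differentiability then follows a fortiori.
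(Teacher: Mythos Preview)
Your approach is essentially the paper's: cite \cite{bfgs2013} for differentiability, then split $j'(\bfvarphi_1)-j'(\bfvarphi_2)$ into the gradient, potential, and elasticity contributions and bound each. The paper telescopes the elasticity term into three summands whereas you use $A{:}A-B{:}B=(A+B){:}(A-B)$ to obtain two; this is cosmetic. You also derive the Lipschitz estimate for $S$ directly from the state equation rather than quoting it from \cite{bfgs2013}; your argument is standard and correct.

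There is one genuine slip at the end: you claim that continuous \Frechet{} differentiability on $\Hil\cap\Dual$ ``follows a fortiori'' from the Lipschitz bound on $\Phiad$. It does not. Your Lipschitz estimate relies on the uniform bound $\|\bfu_i\|_{H^1}\leq C$ and on the Lipschitz constant of $S$, both of which use the $L^\infty$-boundedness of $\Phiad$ and are unavailable for general $\bfvarphi\in\Hil\cap\Dual$; Lipschitz continuity of $j'$ on $\Phiad$ says nothing about continuity of $j'$ at points outside (or even in a neighbourhood of) $\Phiad$. The paper separates the two claims: it first records the unrestricted estimate \eqref{eq:derivestim}, which keeps the factors $\|\bfu_i\|_{H^1}$ and $\|\bfu_1-\bfu_2\|_{H^1}$ explicit, and then invokes continuity of $S$ (not Lipschitz) to conclude $j'(\bfvarphi_n)\to j'(\bfvarphi)$ for arbitrary $\bfvarphi_n\to\bfvarphi$ in $\Hil\cap\Dual$. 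Only afterwards does it specialize to $\Phiad$ and insert the uniform bounds to obtain the Lipschitz constant. Your intermediate estimates would yield the same conclusion if you postpone plugging in the uniform constants and invoke continuity of $S$ at that stage.
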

\begin{proof}
The \Frechet{} differentiability of $j$ on $\Hil\cap \Dual$ is shown in \cite{bfgs2013}. Let $\bfeta,\bfvarphi_i\in \Hil\cap \Dual$ and $\bfu_i = S(\bfvarphi_i)$, $i=1,2$. Then with \eqref{eq:jderiv}, $\psi_0\in C^{1,1}(\R^N)$, $C_{ijkl}\in C^{1,1}(\R^N)$ and $|\bfC'(\bfvarphi)|\leq C$ $\forall\bfvarphi\in\R^N$ we get
\begin{align}
	|(j'(\bfvarphi_1)- j'(\bfvarphi_2))\bfeta| &\leq \gamma\varepsilon\|\bfvarphi_1-\bfvarphi_2\|_{H^1}\|\bfeta\|_{H^1} + C\frac{\gamma}{\varepsilon}\|\bfvarphi_1-\bfvarphi_2\|_{L^2}\|\bfeta\|_{L^2} \notag\\
	&\phantom{=\ }+ |\smallint_\Omega(\bfC'(\m+\bfvarphi_1)-\bfC'(\m+\bfvarphi_2))(\bfeta)\E(\bfu_1):\E(\bfu_1)|\notag\\
	&\phantom{=\ }+|\smallint_\Omega\bfC'(\m+\bfvarphi_2)(\bfeta)\E(\bfu_1-\bfu_2):\E(\bfu_1)|\notag\\
	&\phantom{=\ } + |\smallint_\Omega\bfC'(\m+\bfvarphi_2)(\bfeta)\E(\bfu_2):\E(\bfu_1-\bfu_2)|\notag\\
	&\leq C\|\bfvarphi_1-\bfvarphi_2\|_{H^1}\|\bfeta\|_{H^1}\notag\\
	&\phantom{=\ } +  \|(\bfC'(\m+\bfvarphi_1)-\bfC'(\m+\bfvarphi_2))\bfeta\|_{L^\infty}\|\bfu_1\|^2_{H^1}+\notag\\
	&\phantom{=\ } + C \|\bfeta\|_{L^\infty}\|\bfu_1-\bfu_2\|_{H^1}(\|\bfu_1\|_{H^1}+\|\bfu_2\|_{H^1})\notag\\
	&\leq C\|\bfeta\|_{H^1\cap L^\infty}\{\|\bfvarphi_1-\bfvarphi_2\|_{H^1} + \|\bfvarphi_1-\bfvarphi_2\|_{L^\infty}\|\bfu_1\|^2_{H^1} \notag\\
	&\phantom{=\ } + \|\bfu_1-\bfu_2\|_{H^1}(\|\bfu_1\|_{H^1}+\|\bfu_2\|_{H^1})\} \label{eq:derivestim}
\end{align}
To show the continuity of $j'$, let $\bfvarphi_n,\bfvarphi\in \Hil\cap\Dual$ for $n\in\N$ with $\bfvarphi_n\to \bfvarphi$ in $\Hil\cap\Dual$. Using \eqref{eq:derivestim} yields
\begin{multline*}
	\|j'(\bfvarphi_n)- j'(\bfvarphi)\|_{(H^1\cap L^\infty)^*} \\
	\leq C(\|\bfvarphi_n-\bfvarphi\|_{H^1\cap L^\infty}(1+\|\bfu_n\|_{H^1}^2)+\|\bfu_n-\bfu\|_{H^1}(\|\bfu_n\|_{H^1}+\|\bfu\|_{H^1})),
\end{multline*}
where $\bfu_n = S(\bfvarphi_n)$ and $\bfu = S(\bfvarphi)$. From the continuity of $S$ we get that $\|\bfu_n\|_{H^1}$ is bounded and that $\|\bfu_n-\bfu\|_{H^1}\to 0$ as $n\to \infty$. This implies
\begin{align*}
	\|j'(\bfvarphi_n)- j'(\bfvarphi)\|_{(H^1\cap L^\infty)^*} \to 0
\end{align*}
and thus $j\in C^1(\Hil\cap\Dual)$.\\
For the Lipschitz continuity of $j'$ we employ estimate \eqref{eq:derivestim} with $\bfvarphi_i\in \Phi_{ad}$, $i=1,2$. Since $\Phi_{ad}$ is bounded in $L^\infty$, we get that $S$ is Lipschitz continuous on $\Phiad$ and that $\|S(\bfvarphi)\|_{H^1}\leq C$, independent of $\bfvarphi\in\Phiad$, see \cite{bfgs2013}. This yields
\begin{align*}
	\|j'(\bfvarphi_1)- j'(\bfvarphi_2)\|_{(H^1\cap L^\infty)^*} \leq C \|\bfvarphi_1-\bfvarphi_2\|_{H^1\cap L^\infty},
\end{align*}
which proofs the Lipschitz continuity of $j'$ in $\Phi_{ad}$.
\end{proof}

\begin{cor}\label{cor:jPhiadassum}
	The spaces $\Hil$ and $\Dual$, together with $j$ and $\Phiad$ given in \eqref{equ:MultPhaseMCPTrans} fulfill the assumptions \ref{ass:LimitsUnique}-\ref{ass:WeakDiff} of the VMPT method.
\end{cor}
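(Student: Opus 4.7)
\textbf{Proof plan for Corollary~\ref{cor:jPhiadassum}.}
The plan is to verify the six hypotheses \ref{ass:LimitsUnique}--\ref{ass:WeakDiff} one after the other, leveraging that the analytic core (Fr\'echet differentiability and its regularity) has already been established in Theorem~\ref{thm:C1}. Most of the checks are structural; the only genuinely analytic point is the weak-$*$ continuity condition~\ref{ass:WeakDiff}.

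For \ref{ass:LimitsUnique}, $\Hil$ is cut out of the reflexive space $H^1(\Omega)^N$ by the continuous linear functional $\strokedint_\Omega\cdot$, hence is a closed subspace and itself reflexive, while $\Dual=L^\infty(\Omega)^N$ is the norm-dual of the separable space $L^1(\Omega)^N$. Compatibility of limits follows because on the bounded Lipschitz domain $\Omega$ both weak convergence in $\Hil$ and weak-$*$ convergence in $\Dual$ entail weak convergence in $L^2(\Omega)^N$ (via the continuous embeddings $H^1\hookrightarrow L^2$ and $L^2\hookrightarrow L^1$), and the $L^2$-weak limit is unique. For \ref{ass:Convex}, the constraints $\strokedint_\Omega\varphi=0$ and $\sum_i\varphi^i\equiv 0$ are affine and closed in $H^1$, the pointwise inequality $\varphi\geq -\m$ is convex and preserved under $L^2$-limits, and $\varphi\equiv 0$ shows non-emptiness.

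For \ref{ass:Bdd}, any $\varphi\in\Phiad$ satisfies $-\mathfrak m_i\leq \varphi^i = -\sum_{j\neq i}\varphi^j \leq \sum_{j\neq i}\mathfrak m_j\leq 1$, so $\|\varphi\|_{L^\infty}\leq 1$. For \ref{ass:BddBelow}, the Ginzburg--Landau energy $E(\varphi+\m)$ is bounded below because the Dirichlet part is nonnegative and $\psi_0$ is continuous on the compact range of $\varphi+\m$, while the compliance term is controlled by $\|\bfg\|_{L^2(\Gamma_g)}\|S(\varphi+\m)\|_{H^1}$, which is uniformly bounded on $\Phiad$ by the standard elliptic estimate coming from the coercivity in~\ref{ass:Ccoercive}. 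Assumption~\ref{ass:Diff} is exactly Theorem~\ref{thm:C1}.

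The final step is \ref{ass:WeakDiff}. Fix $\varphi\in\Phiad$ and $\varphi_i$ with $\varphi_i\rightharpoonup 0$ in $\Hil$ and $\varphi_i\overset{*}{\rightharpoonup} 0$ in $\Dual$, and split $\spr{j'(\varphi),\varphi_i}$ into the three summands arising from~\eqref{eq:jderiv} applied to the transformed functional. The gradient term $\int_\Omega \varepsilon\nabla(\varphi+\m):\nabla\varphi_i$ tends to zero by weak $H^1$ convergence. The potential term $\int_\Omega \tfrac{1}{\varepsilon}\psi_0'(\varphi+\m)\varphi_i$ tends to zero because $\psi_0'(\varphi+\m)\in L^\infty\subseteq L^2$ and $\varphi_i\rightharpoonup 0$ in $L^2$ (in fact strongly by Rellich--Kondrachov). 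The compliance derivative term $\int_\Omega \bfC'(\varphi+\m)(\varphi_i)\,\E(\bfu):\E(\bfu)$ with $\bfu=S(\varphi+\m)$ tends to zero because the coefficient $\bfC'(\varphi+\m)\E(\bfu):\E(\bfu)$ lies only in $L^1$, and the weak-$*$ convergence in $L^\infty$ is precisely what pairs with it. This last term is the place where a purely $L^2$-based treatment would fail --- since $\E(\bfu):\E(\bfu)$ need not belong to $L^2$ --- and it is the conceptual motivation for the whole Banach-space generalisation; I expect it to be the main point to get right, though once the topologies are correctly identified it is essentially a one-line duality argument.
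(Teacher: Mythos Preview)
Your proposal is correct and follows essentially the same route as the paper: verify \ref{ass:LimitsUnique}--\ref{ass:WeakDiff} one by one, with the only genuinely analytic step being \ref{ass:WeakDiff}, where the elasticity term $\bfC'(\bfvarphi+\m)\E(\bfu):\E(\bfu)\in L^1$ is paired against the weak-$*$ $L^\infty$ convergence of $\varphi_i$. The only cosmetic difference is that the paper groups the potential term $\tfrac{\gamma}{\varepsilon}\psi_0'(\bfvarphi+\m)$ together with the elasticity term as a single $L^1$ coefficient (both handled by weak-$*$ convergence), whereas you treat the potential term separately via $L^2$/Rellich; both are valid and neither buys anything over the other here.
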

\begin{proof}
	Given the choices for $\Hil$ and $\Dual$ \ref{ass:LimitsUnique} is fulfilled. For $\bfvarphi\in\Phiad$ we have
	\begin{align*}
		\bm {-1}\leq-\m\leq \bfvarphi \leq \bm 1-\m\leq \bm 1\quad \forall \bfvarphi\in\Phiad
	\end{align*}
	almost everywhere in $\Omega$. Thus it holds \ref{ass:Bdd} and $\Phiad\subseteq\Hil\cap\Dual$. Moreover, $\bm 0\in \Phiad$, $\Phiad$ is convex, and since $\Phiad$ is closed in $L^2(\Omega)^N$, it is also closed in $\Hil\hookrightarrow L^2(\Omega)^N$. Thus \ref{ass:Closed} holds.\\
	Assumption \ref{ass:BddBelow} is shown in \cite{bfgs2013} and Theorem \ref{thm:C1} provides \ref{ass:Diff}.\\
	Given
	\begin{align*}
		\spr{j'(\bfvarphi),\bfvarphi_i} &= \int_\Omega\{\gamma \varepsilon \nabla\bfvarphi: \nabla\bfvarphi_i 
+ (\tfrac \gamma \varepsilon \nabla \psi_0(\bfvarphi+\m) -\nabla\bfC(\bfvarphi+\m)\E(\bfu):\E(\bfu))\cdot \bfvarphi_i\}
	\end{align*}
the first term converges to $0$ if $\bfvarphi_i\to 0$ weakly in $H^1$. 
With \ref{ass:psiC11} and $\bfu \in  H^1_D$ we have that
$\tfrac \gamma \varepsilon\nabla \psi_0(\bfvarphi+\m)-\nabla\bfC(\bfvarphi+\m)\E(\bfu):\E(\bfu)\in L^1(\Omega)^N$.
Hence the remaining term converges to $0$ if $\bfvarphi_i\to 0$ weakly-* in $L^\infty$,
which proves that \ref{ass:WeakDiff} is fulfilled.
\end{proof}
\medskip
Possible choices of the inner product $a_k$ for the VMPT method are the inner product on $\Hil$, i.e.
\begin{align}
	a_k(\bm p, \bm y) &= (\bm p, \bm y)_\Hil = \int_\Omega\nabla \bm p:\nabla \bm y \label{eq:akH1}
\end{align}
and the scaled version
$
	a_k(\bm p, \bm y) = \gamma\varepsilon(\bm p, \bm y)_\Hil 
$.
Both fulfill the assumptions \ref{ass:aInnerProduct}-\ref{ass:aToZ}.
We also give an example of a pointwise choice of an inner product, which includes second order information. 
Since this choice is not continuous in $\Hil$, it is not obvious that it fulfills the assumptions. To motivate the choice of this inner product we look at the second order derivative of $j$, which is formally given by
\begin{align*}
  j''(\bfvarphi_k)[\bm p,\bm y] &= \int_\Omega\{\gamma\varepsilon \nabla\bm p:\nabla\bm y - 2(\bfC'(\m+\bfvarphi_k)(\bm y)\E(S'(\bfvarphi_k)\bm p):\E(\bfu_k)) + \\
   &\phantom{=\ } +  \frac{\gamma}{\varepsilon}\nabla^2\psi_0(\m+\bfvarphi_k)\bm p\cdot\bm y - \bfC''(\m+\bfvarphi_k)[\bm p,\bm y]\E(\bfu_k):\E(\bfu_k)\}.
\end{align*}
In \cite{bfgs2013} it is shown that $\bfz_p:=S'(\bfvarphi_k)\bm p\in H^1_D$ is the unique weak solution of the linearized state equation 
\begin{align}\label{eq:linearized}
	\int_\Omega \bfC(\m+\bfvarphi_k)\E(\bm z_p):\E(\bfeta) = -\int_\Omega \bfC'(\m+\bfvarphi_k)\bm p\E(\bfu_k):\E(\bfeta)\quad\forall \bfeta\in H^1_D
\end{align} 
and that $\|\bfz_p\|_{H^1}\leq C\|\bm p\|_{L^\infty}$ holds.
Since the first two terms in $j''$ define an inner product (see proof of Theorem \ref{thm:akso}), we use
\begin{align}\label{eq:akso}
	a_k(\bm p, \bm y) &= \gamma\varepsilon (\bm p,\bm y)_\Hil - 2 \int_\Omega \bfC'(\m+\bfphi_k)(\bm y) \E(\bm z_p):\E(\bfu_k)
\end{align}
as an approximation of $j''(\bfvarphi_k)$. 
Testing equation \eqref{eq:linearized} for $\bfz_y=S'(\bfvarphi_k)\bm y$ with $\bfz_p $ we can equivalently write
\begin{align}\label{eq:akso2}
	a_k(\bm p, \bm y)= \gamma\varepsilon (\bm p,\bm y)_\Hil + 2\int_\Omega \bfC(\m+\bfvarphi_k)\E(\bm z_p):\E(\bm z_y).
\end{align}
We would like to mention that the $C^2$-regularity of $j$ is not necessary for this definition of $a_k$.

\begin{theorem}\label{thm:akso}
	The bilinear form $a_k$ given in \eqref{eq:akso} fulfills the assumptions \ref{ass:aInnerProduct}-\ref{ass:aToZ}.
\end{theorem}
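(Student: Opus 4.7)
The strategy is to switch between the two equivalent expressions \eqref{eq:akso} and \eqref{eq:akso2} according to convenience. Assumptions \ref{ass:aInnerProduct}--\ref{ass:aBdd} are easiest to read off from the symmetric form \eqref{eq:akso2}: symmetry follows from symmetry of $(\cdot,\cdot)_\Hil$ and of $\bfC$ (by \ref{ass:Csym}), and coercivity of $\bfC$ (by \ref{ass:Ccoercive}) makes the second summand non-negative, so $a_k(\bm p,\bm p)\geq \gamma\varepsilon\|\bm p\|_\Hil^2$ yields both positive definiteness and \ref{ass:aCoercive} with $c_1=\gamma\varepsilon$. For the upper bound \ref{ass:aBdd}, Cauchy--Schwarz combined with $|\bfC|\leq a_1$ and the a priori estimate $\|\bm z_p\|_{H^1}\leq C\|\bm p\|_{L^\infty}$ stated after \eqref{eq:linearized} gives $|a_k(\bm p,\bm y)|\leq \gamma\varepsilon\|\bm p\|_\Hil\|\bm y\|_\Hil + C\|\bm p\|_{L^\infty}\|\bm y\|_{L^\infty}\leq C\|\bm p\|_{\Hil\cap\Dual}\|\bm y\|_{\Hil\cap\Dual}$.

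For \ref{ass:aWeakDiff} the asymmetric form \eqref{eq:akso} is more convenient, being linear in $\bm y$ for fixed $\bm p$. Given $\bm y_i\to\bm y$ weakly in $\Hil$ and weakly-$\ast$ in $\Dual$, the inner-product term passes to the limit by definition of weak convergence. The remaining integral can be written as $\int_\Omega\bm F\cdot\bm y_i$ with $\bm F\in L^1(\Omega)^N$ built pointwise from $\bfC'(\m+\bfvarphi_k)$ (bounded in $L^\infty$ by \ref{ass:CC11}) and from $\E(\bm z_p),\E(\bfu_k)\in L^2$, and therefore passes to the limit by weak-$\ast$ convergence of $\bm y_i$ in $L^\infty$ tested against the fixed $L^1$-function $\bm F$.

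The main obstacle is \ref{ass:aToZ}. For a subsequence $\bfvarphi_{k_i}\to\bfvarphi$ in $\Hil\cap\Dual$ and sequences $\bm p_i\to 0$ strongly in $\Hil$ and weakly-$\ast$ in $\Dual$, $\bm y_i\to\bm y$ in $\Hil\cap\Dual$, I would expand $a_{k_i}(\bm p_i,\bm y_i)$ via \eqref{eq:akso}. The first summand is controlled by $\gamma\varepsilon\|\bm p_i\|_\Hil\|\bm y_i\|_\Hil\to 0$. For the second, set $\bm z_i:=S'(\bfvarphi_{k_i})\bm p_i$ and $\bm M_i:=\bfC'(\m+\bfvarphi_{k_i})(\bm y_i)\E(\bfu_{k_i})$; the regularity $\bfC'\in C^{0,1}$ combined with strong $L^\infty$-convergence of $\bfvarphi_{k_i}$ and $\bm y_i$ and continuity of $S$ (which yields $\bfu_{k_i}\to\bfu$ in $H^1$, hence $\E(\bfu_{k_i})\to\E(\bfu)$ in $L^2$) gives $\bm M_i\to\bm M$ strongly in $L^2$. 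Hence it suffices to show $\E(\bm z_i)\rightharpoonup 0$ weakly in $L^2$, after which the weak--strong pairing closes the estimate.

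The hard part is therefore to prove $\bm z_i\rightharpoonup 0$ in $H^1$. The plan is to use the a priori bound $\|\bm z_i\|_{H^1}\leq C\|\bm p_i\|_{L^\infty}\leq C$ to extract a weak $H^1_D$-limit $\bm z^\ast$, and then pass to the limit in the linearized state equation \eqref{eq:linearized} tested against arbitrary $\bfeta\in H^1_D$. The left-hand side converges to $\int_\Omega\bfC(\m+\bfvarphi)\E(\bm z^\ast):\E(\bfeta)$ by strong $L^2$-convergence of $\bfC(\m+\bfvarphi_{k_i})\E(\bfeta)$ paired with weak $L^2$-convergence of $\E(\bm z_i)$. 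The right-hand side is $-\int_\Omega\bm p_i\cdot\bm G_i$, with $\bm G_i\to\bm G$ strongly in $L^1$ (where $\bm G\in L^1(\Omega)^N$ is built from $\bfC'(\m+\bfvarphi)$, $\E(\bfu)$, $\E(\bfeta)$); the splitting $\int_\Omega\bm p_i\cdot\bm G_i = \int_\Omega\bm p_i\cdot(\bm G_i-\bm G) + \int_\Omega\bm p_i\cdot\bm G$ vanishes because $\|\bm p_i\|_{L^\infty}$ is bounded and $\bm p_i$ converges weakly-$\ast$ against the fixed $L^1$-function $\bm G$. The limit equation $\int_\Omega\bfC(\m+\bfvarphi)\E(\bm z^\ast):\E(\bfeta)=0$ for all $\bfeta\in H^1_D$, tested with $\bfeta=\bm z^\ast$ and combined with coercivity of $\bfC$ and Korn's inequality on $H^1_D$, forces $\bm z^\ast=0$; since any weakly convergent subsequence has the same limit, $\bm z_i\rightharpoonup 0$ for the whole sequence.
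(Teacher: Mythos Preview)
Your proof is correct, and for \ref{ass:aInnerProduct}--\ref{ass:aWeakDiff} it matches the paper almost verbatim. For \ref{ass:aToZ}, however, you take a genuinely different route. The paper exploits the symmetry of $a_k$ to swap the two arguments: it verifies $a_{k_i}(\bm y_i,\bm p_i)\to 0$ with $\bm p_i$ in the first slot of \eqref{eq:akso} taken to be the sequence that converges \emph{strongly} in $\Hil\cap\Dual$. Then $\bm z_{p_i}=S'(\bfvarphi_{k_i})\bm p_i$ converges strongly in $H^1_D$ by the continuous \Frechet{} differentiability of $S:L^\infty\to H^1$, and the remaining integral is handled by a short telescoping estimate ending with a fixed $L^1$-function paired against the weakly-$\ast$ vanishing $\bm y_i$. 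You instead keep the original roles, so that $\bm p_i$ converges only weakly-$\ast$ in $L^\infty$; this forces you to recover merely weak $H^1$-convergence of $\bm z_i=S'(\bfvarphi_{k_i})\bm p_i$ via a compactness-plus-limit-in-the-linearised-equation argument and Korn's inequality, and then close with a weak--strong pairing against $\bm M_i$.

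Both arguments are sound. The paper's is shorter and avoids the entire compactness step, at the price of invoking continuity of $S'$ (which the paper states separately). Your argument is a bit longer but uses only continuity of $S$ and the a priori bound $\|\bm z_p\|_{H^1}\leq C\|\bm p\|_{L^\infty}$, so it is slightly more self-contained.
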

\begin{proof}
	Due to \ref{ass:Ccoercive} and \eqref{eq:akso2} we have
	\begin{align*}
		a_k(\bm p,\bm p) \geq \gamma\varepsilon \|\bm p\|^2_{\Hil}.
	\end{align*}
	Thus, \ref{ass:aInnerProduct} and \ref{ass:aCoercive} is fulfilled. Furthermore, \ref{ass:aBdd} holds due to
	\begin{align*}
		a_k(\bfp,\bm y)&\leq \gamma\varepsilon \|\bfp\|_{H^1}\|\bm y\|_{H^1}+C\|\bfz_p\|_{H^1}\|\bfz_y\|_{H^1}\\
		&\leq \gamma\varepsilon \|\bfp\|_{H^1}\|\bm y\|_{H^1}+C\|\bfp\|_{L^\infty}\|\bm y\|_{L^\infty}\leq C\|\bfp\|_{\Hil\cap\Dual}\|\bm y\|_{\Hil\cap\Dual}.
	\end{align*}
	\ref{ass:aWeakDiff} is proved as in Corollary \ref{cor:jPhiadassum}.\\
	Finally we prove \ref{ass:aToZ}. For $\bm y_k\to 0$ and $\bm p_k\to \bm p$ in $\Hil$ we have $(\bm y_k,\bm p_k)_\Hil\to 0$ for $k\to\infty$. 
With $\bfvarphi_k\to\bfvarphi$, $\bm p_k\to \bm p$ in $\Dual=L^\infty(\Omega)^N$ and $S:L^\infty(\Omega)^N\to H^1(\Omega)^N$ continuously \Frechet{} differentiable, we have $\bfu_k=S(\bfvarphi_k) \to S(\bfvarphi)=:\bfu$ in $H^1_D$  and $\bfz_{p_k} = S'(\bfvarphi_k)\bfp_k\to S'(\bfvarphi)\bfp =: \bfz_p$ in $H^1_D$. In particular, the sequences are bounded in the corresponding norms, including $\|\bm y_k\|_{L^\infty}\leq C$ if $\bm y_k\to\bm y$ weakly-* in $L^\infty$. Using the Lipschitz continuity and boundedness of $\bfC'$ and $\nabla\bfC(\m+\bfvarphi)\E(\bfz_{p}):\E(\bfu)\in L^1(\Omega)^N$ we have
	\begin{align*}
		&|\smallint_\Omega\bfC'(\m+\bfvarphi_k)\bm y_k\E(\bfz_{p_k}):\E(\bfu_k)| \\
		&\leq |\smallint_\Omega(\bfC'(\m+\bfvarphi_k)-\bfC'(\m+\bfvarphi))\bm y_k\E(\bfz_{p_k}):\E(\bfu_k)|\\
		&\phantom{\leq\ } + |\smallint_\Omega\bfC'(\m+\bfvarphi)\bm y_k\E(\bfz_{p_k}-\bfz_{p}):\E(\bfu_k)|\\
		&\phantom{\leq\ } + |\smallint_\Omega\bfC'(\m+\bfvarphi)\bm y_k\E(\bfz_{p}):\E(\bfu_k-\bfu)| + |\smallint_\Omega\bfC'(\m+\bfvarphi)\bm y_k\E(\bfz_{p}):\E(\bfu)|\\
		&\leq L\|\bfvarphi_k-\bfvarphi\|_{L^\infty}\|\bm y_k\|_{L^\infty}\|\bfz_{p_k}\|_{H^1}\|\bfu_k\|_{H^1} \\
		&\phantom{\leq\ } +\|\bfC'(\m+\bfvarphi)\|_{L^\infty}\|\bm y_k\|_{L^\infty}\|\bfz_{p_k}-\bfz_{p}\|_{H^1}\|\bfu_k\|_{H^1}\\
		&\phantom{\leq\ } + \|\bfC'(\m+\bfvarphi)\|_{L^\infty}\|\bm y_k\|_{L^\infty}\|\bfz_{p}\|_{H^1}\|\bfu_k-\bfu\|_{H^1} \\
		&\phantom{\leq\ } + |\smallint_\Omega (\nabla\bfC(\m+\bfvarphi)\E(\bfz_{p}):\E(\bfu))\cdot \bm y_k| \to 0,
	\end{align*}
	which gives \ref{ass:aToZ}.
\end{proof}

Hence with $0<\lambda_{min}\leq \lambda_k\leq \lambda_{max}$, all assumptions of Theorem \ref{thm:GlobalConvvm} are fulfilled and we get global convergence in the space $H^1(\Omega)^N\cap L^\infty(\Omega)^N$.

\section{Numerical results}
We discretize the structural topology optimization problem \eqref{equ:MultPhaseMCP}-\eqref{equ:Constraints} using standard piecewise linear finite elements
for  the control $\bfvarphi$ and the state variable $\bfu$. The projection type subproblem \eqref{eq:projproblvm} is solved by a primal dual active set (PDAS) method similar to the method described in \cite{bgss2011}.
Many numerical examples for this problem can be found in \cite{bfgrs2014, bghr2014}, e.g. for cantilever beams with up to three materials in two or three space dimensions and for an optimal material distribution within an airfoil. In \cite{bfgrs2014} the choice of the potential $\psi$ as an obstacle potential and the choice of the tensor interpolation $\bfC$ is discussed. Also the inner products $(.,.)_\Hil$ and $\gamma\varepsilon(.,.)_\Hil$ for fixed scaling parameter $\lambda_k=1$ are compared, where both give rise to a mesh independent method and the latter leads to a large speed up. Note that the choice of $(.,.)_\Hil$ with $\lambda_k = (\gamma\varepsilon)^{-1}$ leads to the same iterates than choosing $\gamma\varepsilon(.,.)_\Hil$ and $\lambda_k=1$. Furthermore, it is discussed in \cite{bfgrs2014} that the choice of $\gamma\varepsilon(.,.)_\Hil$ can be motivated using $j''(\bfphi)$ or by the fact that for the minimizers $\{\bfphi_\varepsilon\}_{\varepsilon>0}$ the Ginzburg-Landau energy converges to the perimeter as $\varepsilon\to 0$ and hence $\gamma\varepsilon\|\bfphi_\varepsilon\|^2_\Hil\approx const$ independent of $\varepsilon\ll 1$. However, since this holds only for the iterates $\bfphi_k$ when the phases are separated and the interfaces are present with thickness proportional to $\varepsilon$, we suggest to adopt $\lambda_k$ in accordance to this. As updating strategy for $\lambda_k$ the following method is applied: 
Start with $\lambda_0 = 0.005(\gamma\varepsilon)^{-1}$, then if $\alpha_{k-1}=1$ set $\tilde\lambda_k = \lambda_{k-1}/0.75$, else $\tilde\lambda_k = 0.75\lambda_{k-1}$ and $\lambda_k = \max\{\lambda_{min},\min\{\lambda_{max},\tilde \lambda_k\}\}$. The last adjustment yields that \ref{ass:Lambda} is fulfilled. Numerical experiments in \cite{bfgrs2014} show that this in fact produces for the choice $(.,.)_\Hil$ a scaling with $\lambda_k\approx (\gamma\varepsilon)^{-1}$ for large $k$.\\
In \cite{bfgrs2014, bghr2014} the effect of obtaining various local minima of the nonconvex optimization problem \eqref{equ:MultPhaseMCP}-\eqref{equ:Constraints} 
by choosing different initial guesses $\bfphi_0$ can be seen. However also the other parameters have an influence.\\
In this paper we concentrate on comparing different choices of the inner products $a_k$ and use herefor the cantilever beam described in \cite{bfgrs2014} with $\psi_0(\bfvarphi) = \tfrac 1 2 (1-\bfphi\cdot\bfphi)$ and a quadratic interpolation of the stiffness tensors $\bfC(\bfvarphi)$. The computation are performed on a personal computer with 3GHz and 4GB RAM. First we discuss the choice of $(.,.)_{L^2}$ versus $(.,.)_\Hil$. The choice of the $L^2$-inner product leads to the commonly used projected $L^2$-gradient method. However, $(.,.)_{L^2}$ does not fulfill the assumptions of the VMPT method, since $j$ is not differentiable in $L^2(\Omega)^N$ or $L^2(\Omega)^N\cap L^\infty(\Omega)^N$. Thus, global convergence is given for the discretized, finite dimensional problem but not in the continuous setting. This leads in contrast to the choice of $(.,.)_\Hil$ to mesh dependent iteration numbers for the $L^2$-gradient method, which can be seen in Table \ref{tab:L2H1}. The values in Table \ref{tab:L2H1} were computed for different uniform mesh sizes $h$ with the parameters $\varepsilon=0.04$, $\gamma=0.5$, $\bfphi_0 \equiv \m$ and $tol=10^{-5}$ for the stopping criterion $\sqrt{\gamma\varepsilon}\|\nabla\bfphi_k\|_{L^2}\leq tol$. The behaviour of iteration numbers is in accordance to our analytical results in function spaces considering $h\to 0$. Furthermore, numerical results not listed here show that we obtain 
for $(.,.)_\Hil$ and large $k$ scalings $\lambda_k \approx (\gamma\varepsilon)^{-1}$  independent of the mesh parameter $h$, whereas the $L^2$-inner product produces $\lambda_k$ scaled with $h^2$. Since the algorithm using the $L^2$-inner product is equivalent to the explicit time discretization of the $L^2$-gradient flow, i.e. of the Allen-Cahn variational inequality coupled with elasticity, with time step size $\Delta t = \lambda_k$, the scaling $\lambda_k = \grossO(h^2)$ reflects the known stability condition $\Delta t = \grossO(h^2)$ for explicit time discretizations of parabolic equations.\\

\begin{table}[h]\centering
\begin{tabular}{|l||r|r|r|r|r|}
  \hline
 $ h$ & $2^{-4}$ &  $2^{-5}$ & $2^{-6}$ & $2^{-7}$ & $2^{-8}$\\
  \hline\hline
$(.,.)_{L^2}$ & 323   &  5015  & 18200  & 57630  & 172621\\
$(.,.)_\Hil$ & 111 & 407 & 320 & 275 & 269\\
  \hline
\end{tabular}
\caption{Comparison of iteration numbers for $(.,.)_{L^2}$ and $(.,.)_\Hil$.}\label{tab:L2H1}
\end{table}

Next we compare $(.,.)_\Hil$ with $a_k$ given in \eqref{eq:akso}, which incorporates second order information. As experiment we again use the cantilever beam in \cite{bfgrs2014}, now with $\varepsilon = 0.001$, $\gamma = 0.002$, $tol = 10^{-4}$ and random initial guess $\bfvarphi_0$ together with an adaptive mesh, which is fine on the interface with $h_{max} = 2^{-6}$ and $h_{min} = 2^{-11}$. The parameter $\lambda_k$ is updated as described above. The computational costs of one iteration with $a_k$ given in \eqref{eq:akso} is significantly higher, since the calculation of $\mathcal P_k(\bfphi_k)$ requires the solution of a quadratic optimization problem with $\bfphi\in\Phiad$ and in addition with the linearized state equation \eqref{eq:linearized} as constraints. However, in each PDAS iteration solving the subproblem for fixed $k$, only the right hand side of \eqref{eq:linearized} changes, namely only $\bm p$. We factorize the matrix in the discrete equation once such that for each $\bm p$ only a cheap forward and backward substitution has to be done. In Table \ref{tab:akCompare} the corresponding iteration numbers, the total CPU time, the values of the combined cost functional $j(\bfphi^*)$ as well as of the parts, i.e. the mean compliance and the Ginzburg-Landau energy are listed. One observes the drastic reduction in iteration numbers using second order information. Due to the mentioned higher costs of calculating the search directions the total CPU-time is only halved. Nevertheless, this can be possibly improved using a more sophisticated solver for $\mathcal P_k(\bfphi_k)$. It can be also observed that the cost $j(\bfphi^*)$ and the probably more interesting value of the mean compliance is lower. Hence, the different inner products result in different local minima, which are shown in Figure \ref{fig:SmallGamma}. The inner product given in \eqref{eq:akso} yields a finer structure. Also in other experiments we observed a local minima with lower cost value for this choice of $a_k$.

\begin{figure}
\subfloat[$(.,.)_\Hil$.]{\includegraphics[width=6cm]{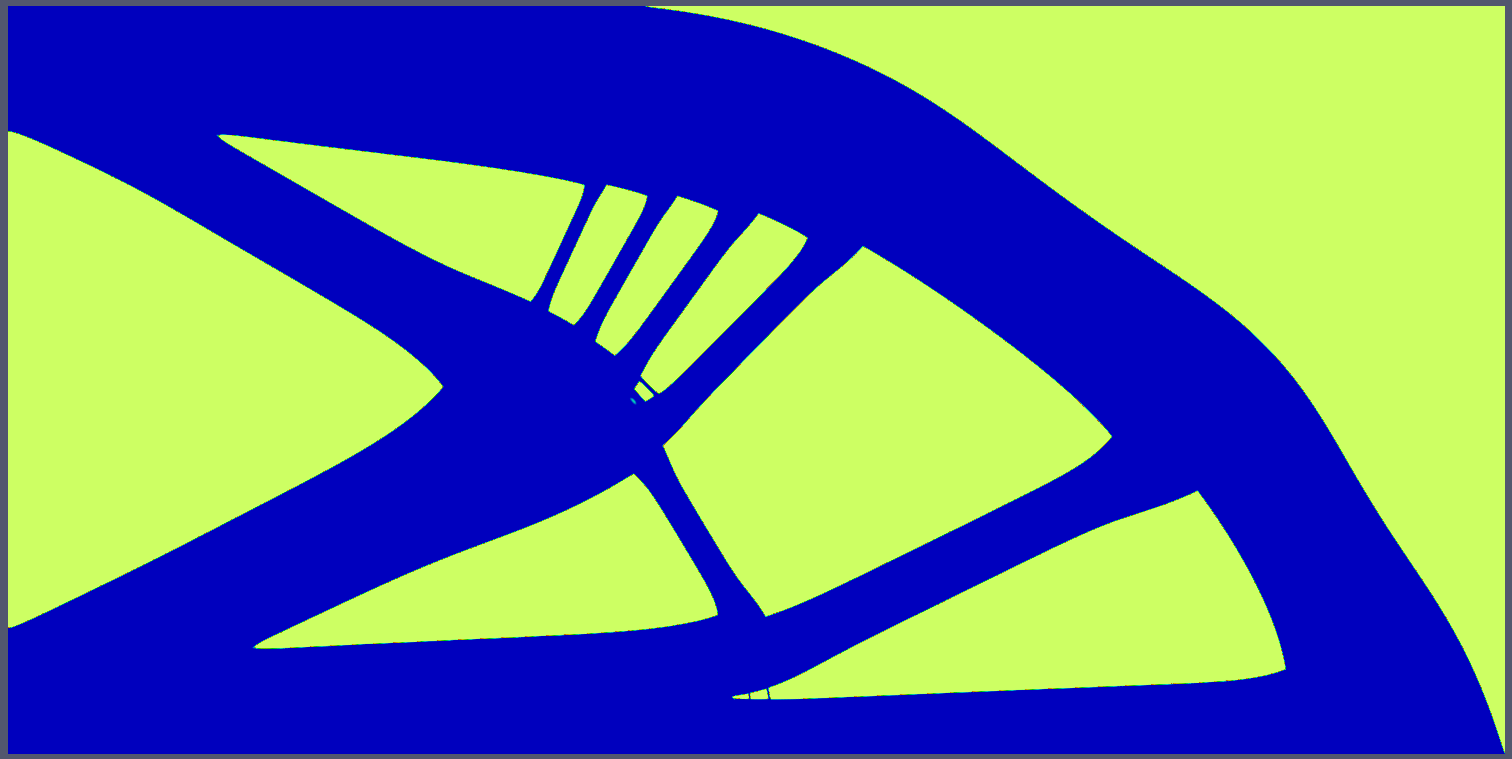}}\phantom{kkkl}\subfloat[$a_k$ given in \eqref{eq:akso}.]{\includegraphics[width=6cm]{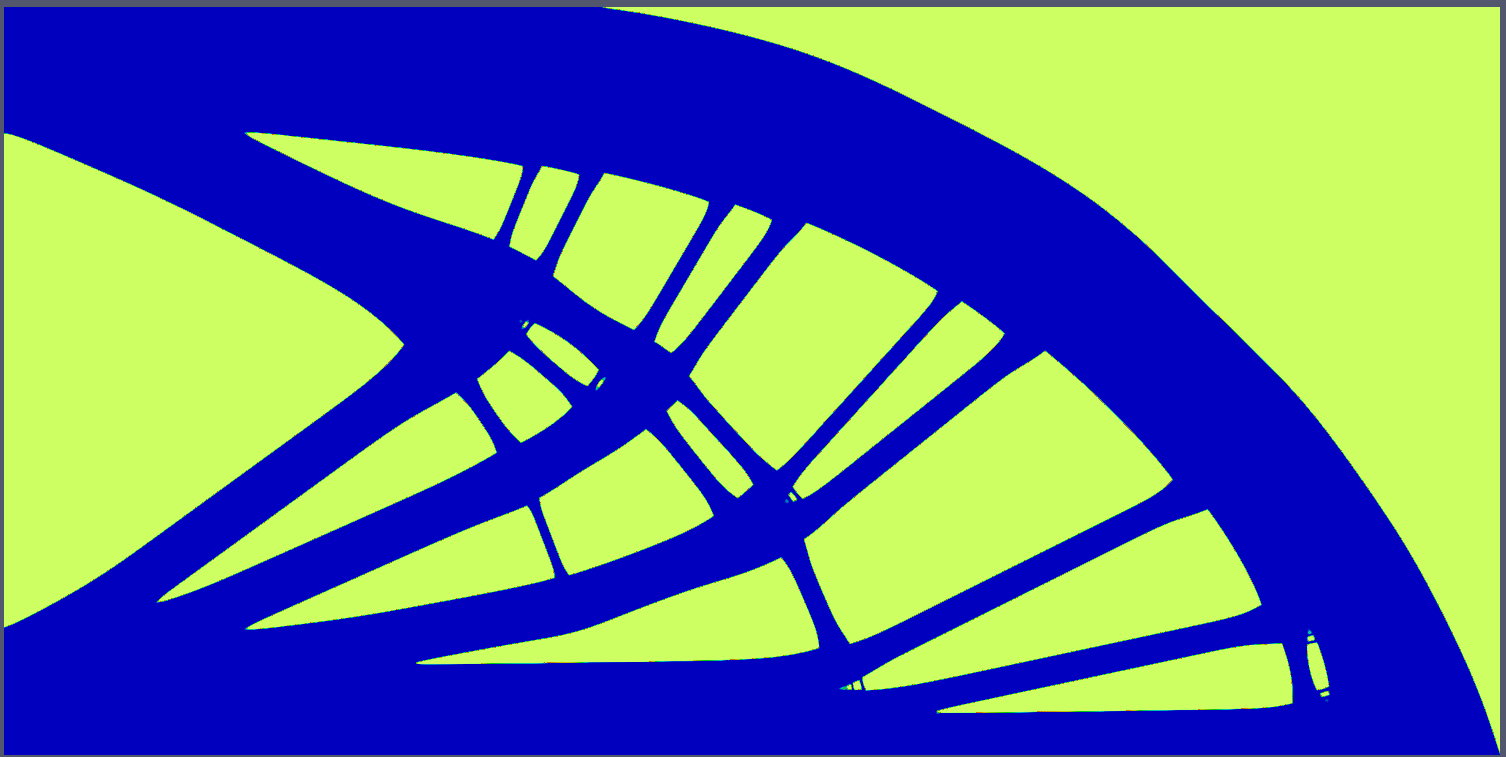}}\\
\caption{Local minima for the cantilever beam.}\label{fig:SmallGamma}
\end{figure}

\begin{table}[h]\centering
\begin{tabular}{|l||r|r|r|r|r|}
  \hline
  inner product & iterations & CPU time & $j(\bfvarphi^*)$ & $\int_{\Gamma_g}\bfg\cdot\bfu^*$ & $E(\bfvarphi)$ \\
  \hline\hline
  $(.,.)_\Hil$ & 11189 & 42h 12min & 15.07 & 15.03 & 20.79\\
  $a_k$ in \eqref{eq:akso} & 851 & 19h & 14.99 & 14.93 & 30.12\\
  \hline
\end{tabular}
\caption{Comparison of two different inner products.}\label{tab:akCompare}
\end{table}


We successfully applied also an L-BFGS update in function spaces (see e.g. \cite{gruver1981algorithmic} for the unconstrained case in Hilbert space) of the metric $a_k$, i.e. starting with $a_0(\bm u,\bm v) = \gamma\varepsilon(\bm u,\bm v)_\Hil$ we use the update 
\\ \centerline{$
a_{k+1}(\bm u,\bm v) = a_k(\bm u,\bm v) - \frac {a_k(\bmsub p_k,\bmsub u) a_k(\bmsub p_k,\bmsub v)}{a_k(\bmsub p_k,\bmsub p_k)} 
+ \frac{\spr{y_k,\bmsub u},\spr{y_k,\bmsub v}}{\spr{y_k,\bmsub p_k}}
$ }\\
in case that $\spr{y_k,\bm p_k}>0$, where $\bm p_k := \bfphi_{k+1}-\bfphi_k$ and	$y_k := j'(\bfphi_{k+1})-j'(\bfphi_k)$, which performs very good especially for small $\gamma$. Note that -- as in the finite dimensional case -- assumption \ref{ass:aCoercive} cannot be shown for this sequence of inner products, but numerical experiments show that the discretized method is mesh independent, see Table \ref{tab:BFGS} for the above cantilever beam example, where the maximal recursion depth is set to 10.
\begin{table}[h]\centering
\begin{tabular}{|r||r|r|r|}
  \hline
  $h$ &  $2^{-5}$ & $2^{-6}$ & $2^{-7}$ \\
  \hline\hline
$H^1$-BFGS iterations & 85   &  88  & 86\\
  \hline
\end{tabular}
\caption{Mesh independent iteration numbers for the $H^1$-BFGS method.}\label{tab:BFGS}
\end{table}

The following compliant mechanism problem
\begin{align*}
\min\; \;   &\frac 1 2\int_{\Omega_{obs}} (1-\varphi^N)|\bfu-\bfu_\Omega|^2 +   \gamma E(\bfphi),
\end{align*}
where the elasticity equation \eqref{equ:Elasticity} and the constraints \eqref{equ:Constraints} have to hold, is more difficult. In our numerical analysis the solution process is more sensitive to the choice of $a_k$. Here the above $H^1$-BFGS approach enables us to solve the problem in an acceptable time. Until $\gamma\varepsilon\|\nabla v_k\|_{L^2}\leq tol=10^{-4}$ the calculation of the material distribution in Figure \ref{fig:Cruncher} took 22 hours. It aims to crunch a nut in the middle of the left boundary when the force acts on the right hand side from above and below and the mechanism is supplied on the left boundary.

Moreover, we also successfully applied the VMPT method on the following drag minimization problem of the Stokes flow using a phase field approach, which is 
analysed in \cite{HechtStokesEnergy}:
\begin{align*}
	\min  \int_\Omega \frac{1}{2}|\nabla \bfu|^2 + &\frac 1 2 \alpha_\varepsilon(\varphi)|\mathbf{u}|^2 + \gamma E(\varphi)\\
	\int_\Omega \alpha_\varepsilon(\varphi)\bfu \mathbf{v} + \int_\Omega \nabla\bfu\cdot\nabla \mathbf{v} &= \mathbf{0} \quad\forall \mathbf{v}\in H^1_{0,div}(\Omega)\\
	\mathbf{u}|_{\partial\Omega} \equiv \left( 1,  0 \right)^T,\quad	\strokedint \varphi &=  0.75,\quad	-1\leq \varphi \leq 1.
\end{align*}
We applied a nested approach in $h$ and $\varepsilon$ as well as an adaptive grid. As inner products we used the above $H^1$-BFGS method and obtained the result in Figure 
\ref{fig:Stokes} with 188 iterations to obtain $tol=10^{-3}$, which took 17 minutes.

A different type of optimization problem is the inverse problem for a discontinuous diffusion coefficient, where the discontinuous coefficient $a$ is smoothed by a phase field approach and no mass conservation is used \cite{DeEllSty2015}:
\begin{align*}
& \qquad \min\; \;   \frac 1 2\int_{\Omega} |u-u_{obs}|^2 + \gamma E(\varphi) \\
	\text{s.t.}\quad& 
\int_\Omega a(\varphi)\nabla u\cdot \nabla\xi = \int_{\Gamma} g \xi
\quad\forall \xi\in H^1\quad \text{and}\quad \int_\Omega u = \int_\Omega u_{obs},\quad
-1 \leq \varphi \leq 1 .
\end{align*}
We choose $u_{obs}$ as solution of the state equation for $\varphi$ shown in the upper part of Figure \ref{fig:InvProbl} with added noise of 5\% and obtain the solution shown in the lower part of Figure \ref{fig:InvProbl}.
\begin{figure}
\subfloat[Crunching mechanism.]{\label{fig:Cruncher}\includegraphics[width=0.3\textwidth,height=0.25\textwidth]{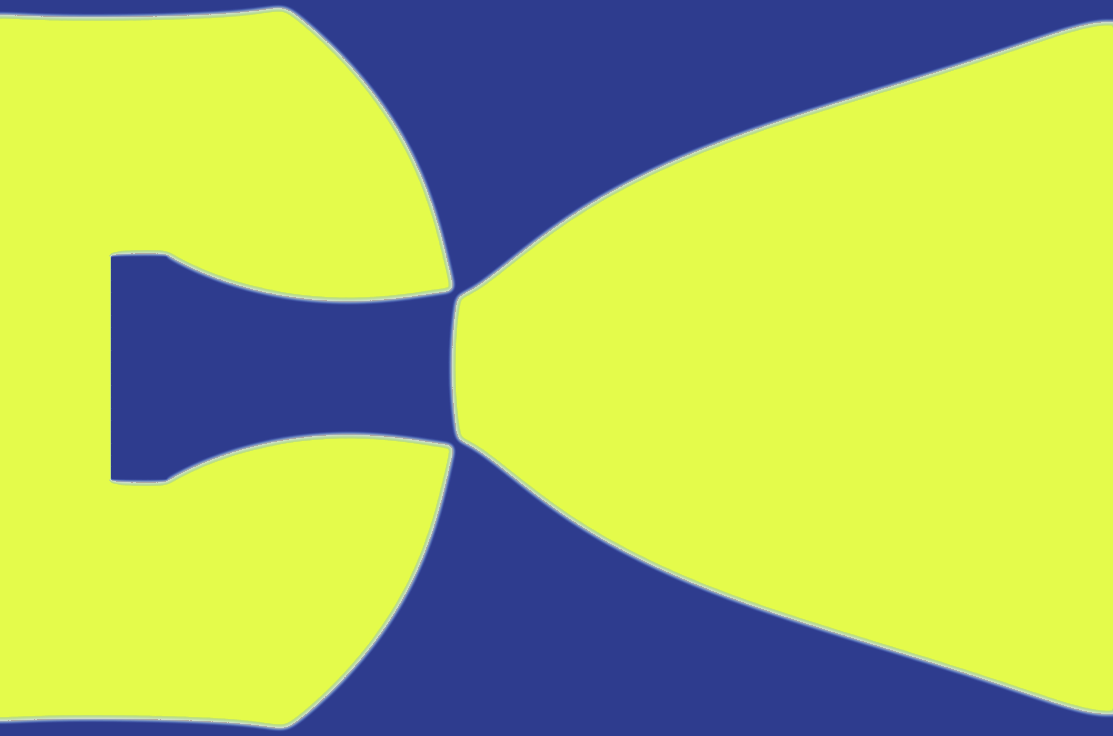}\hspace{0.005\textwidth}}
\subfloat[Obstacle minimizing drag.]{\label{fig:Stokes}\hspace{0.02\textwidth}
\includegraphics[width=0.3\textwidth,height=0.25\textwidth]{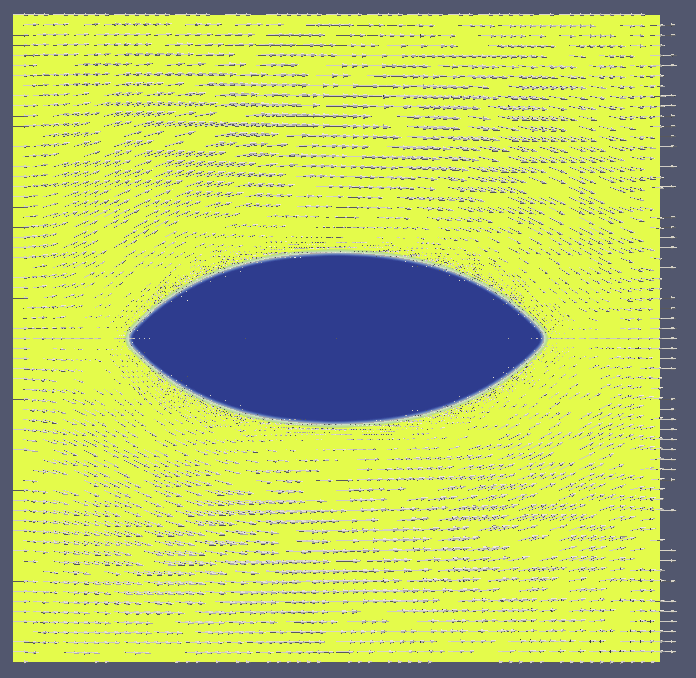}}
\subfloat[Identified coefficient.]{\label{fig:InvProbl}\hspace{0.02\textwidth}
\includegraphics[width=0.25\textwidth,height=0.25\textwidth]{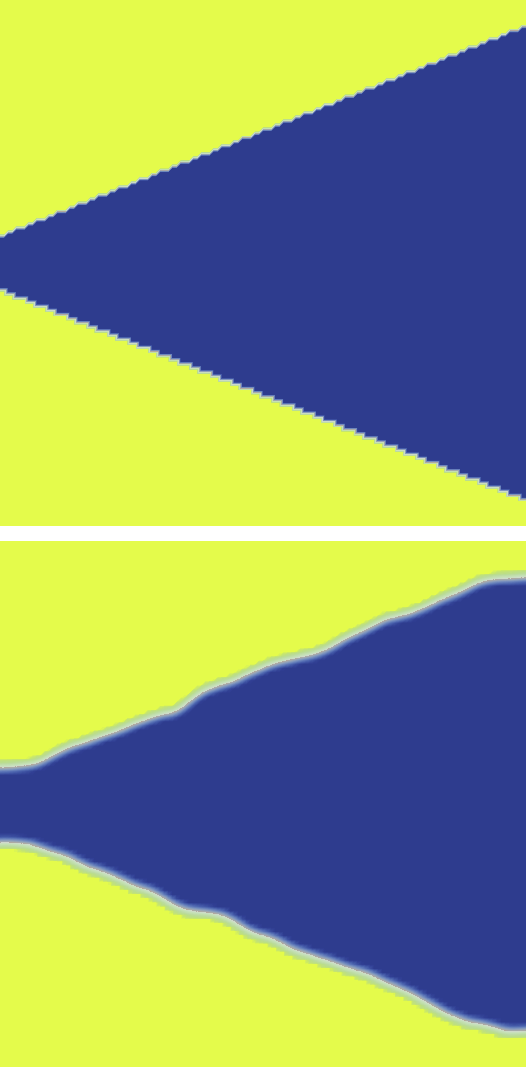}\hspace{0.05\textwidth}}\\
\caption{Successful applications of the VMPT method.}
\end{figure}

The VMPT method can also be used for image inpainting using a phase field approach by considering
\begin{align*}
\min\; \;   &\tfrac 1 2\|\varphi-f\|^2_{H(\Omega\setminus D)} + \gamma E(\varphi)
\end{align*}
such that $\varphi$ fulfills \eqref{equ:Constraints}, where $f$ is the given image and the inpainting is performed in $D$ 
\cite{BuHeSch09}. The method can adjust to the chosen metric $H(\Omega\setminus D)$ and for this problem a line search with exact step length can be applied \cite{KiesMasterThesis}.\\

The last four mentioned application examples are preliminary results and are under further studies. 
To our knowledge the VMPT-method outperforms the existing applied optimization algorithms in these cases.

\bibliographystyle{plain}
\bibliography{literature}


\end{document}